\newtheorem{theorem}{Theorem}[section]
\newtheorem{proposition}[theorem]{Proposition} 
\newtheorem{lemma}[theorem]{Lemma}
\theoremstyle{definition}
\newtheorem{definition}[theorem]{Definition}
\newtheorem{question}[theorem]{Question}
\newtheorem{observation}[theorem]{Observation}
\theoremstyle{remark}
\newcommand{\defemph}{\textit}
\newcommand{\zfc}{\textrm{ZFC}}
\newcommand{\zf}{\textrm{ZF}}
\newcommand{\mc}{\mathcal}
\newcommand{\mbb}{\mathbb}
\newcommand{\p}{\mathcal{P}}
\title[Ramsey Theory on Generalized Baire Space]
 {Ramsey Theory on Generalized Baire Space}
\author{Dan Hathaway}
\address{Mathematics Department\\
University of Denver\\
Denver, CO 80208, U.S.A.}
\email{Daniel.Hathaway@du.edu}
\begin{document}

\begin{abstract}
We show that although
 the Galvin-Prikry Theorem
 does not hold on generalized Baire
 space with the standard topology,
 there are similar theorems which do hold
 on generalized Baire space
 with certain coarser topologies.
\end{abstract}

\maketitle

\section{Introduction and Some Definitions}

Given ordinals $\kappa$ and $\gamma$,
 $[\kappa]^\gamma$ is the set of all
 subsets of $\kappa$ of order type $\gamma$,
 and $[\kappa]^{<\gamma}$ is the set of all
 subsets of $\kappa$ of order type $< \gamma$.
In this paper, for an infinite cardinal $\kappa$,
 we will consider colorings of $[\kappa]^\kappa$
 as opposed to $[\kappa]^\mu$ for some $\mu < \kappa$.
Given a function $c : X \to Y$
 and a set $Z \subseteq X$,
 $c``Z$ is the image of $Z$ under $c$.
We use the convention that natural numbers
 are ordinals, so for example $2 = \{0,1\}$.
We will sometimes use the notation
 $(\alpha,\beta)$ for the set of all ordinals
 $\gamma$ such that $\alpha < \gamma < \beta$,
 and $(\alpha,\beta]$ for the set
 $(\alpha,\beta) \cup \{\beta\}$, etc.

\begin{definition}
Let $\kappa$ be a cardinal.
Given sets $A, B \subseteq \kappa$,
 a pair $(A,B)$
 such that $A \cap B = \emptyset$
 is called a \defemph{pattern}.
Given $\mc{A}, \mc{B} \subseteq \p(\kappa)$,
 an $(\mc{A}, \mc{B})$-\defemph{pattern} is a pair
 $(A,B)$ such that $A \in \mc{A}$ and $B \in \mc{B}$.
A set $X \in [\kappa]^\kappa$ \defemph{matches}
 the pattern $(A,B)$ iff
 $A \subseteq X$ and $B \cap X = \emptyset$.
Finally,
 $[A;B]$ is the set of all $X \in [\kappa]^\kappa$
 which match $(A,B)$.
\end{definition}

\begin{definition}
Fix $\mc{A}, \mc{B} \subseteq \p(\kappa)$.
$\Sigma(\mc{A}, \mc{B})$
 is the collection of all
 $\mc{S} \subseteq [\kappa]^\kappa$
 that are unions of sets of the form
 $[A;B]$ for $(A,B) \in \mc{A} \times \mc{B}$.
That is, sets $\mc{S}$
 for which there exists a set $\mc{Q}$ of
 $(\mc{A},\mc{B})$-patterns
 such that
 $\mc{S} = \{ X
 \in [\kappa]^\kappa : X \mbox{ matches some }
 (A,B) \in \mc{Q} \}$.
We say that $\mc{Q}$ \defemph{generates}
 $\mc{S}$.
$\Delta(\mc{A}, \mc{B})$ is the collection of all
 $\mc{S} \subseteq [\kappa]^\kappa$ such that
 $\mc{S}$ and $[\kappa]^\kappa - \mc{S}$ are in
 $\Sigma(\mc{A},\mc{B})$.
\end{definition}

Hence, $\mc{S} \in \Sigma(\mc{A},\mc{B})$ iff
 there is a collection of patterns
 $\{ (A_i, B_i) \in \mc{A} \times \mc{B} :
 i \in I \}$ such that
 for each $X \in [\kappa]^\kappa$,
 $X \in \mc{S}$ iff
 $(\exists i \in I)$
 $X$ matches $(A_i,B_i)$.
Also, $\mc{S} \in \Delta(\mc{A}, \mc{B})$
 iff there are sets $\mc{Q}^+, \mc{Q}^-$
 of $(\mc{A},\mc{B})$-patterns
 such that for each $X \in [\kappa]^\kappa$,
 $X \in \mc{S}$ iff $X$ matches some $(A,B) \in \mc{Q}^+$,
 and $X \not\in \mc{S}$ iff $X$ matches some
 $(A,B) \in \mc{Q}^-$.

If $\mc{A}$ and $\mc{B}$ are closed under finite unions,
 then $\Sigma(\mc{A},\mc{B})$ is a topology:
 it is closed under finite intersections
 and arbitrary unions,
 and has both $\emptyset$ and $[\kappa]^\kappa$ as elements.
If $\Sigma(\mc{A},\mc{B})$ is a topology,
 then $\Delta(\mc{A}, \mc{B})$ is the collection of
 clopen sets in this topology.
$\Sigma([\kappa]^{<\kappa}, [\kappa]^{<\kappa})$
 is the standard topology on generalized Baire space
 of height $\kappa$.
%

\begin{definition}
A collection $\mc{S} \in [\kappa]^\kappa$
 is \defemph{Ramsey as witnessed by}
 $H \in [\kappa]^\kappa$
 iff one of the following holds:
\begin{itemize}
\item[1)] $(\forall X \in [H]^\kappa)\, X \in \mc{S}$;
\item[2)] $(\forall X \in [H]^\kappa)\, X \notin \mc{S}$.
\end{itemize}
We also say that $H$ is \defemph{homogeneous}
 for $\mc{S}$.
More generally, we say that
 $c : [\kappa]^\kappa \to \lambda$ is Ramsey just in case
 there is a set $H \in [\kappa]^\kappa$
 such that $|c``[H]^\kappa| = 1$,
 and we say that $H$ is homogeneous for $c$.
\end{definition}

One of the earliest results in this area is
 the Galvin-Prikry Theorem \cite{gp},
 which says that not only is every
 open set in the topology
 $\Sigma([\omega]^{<\omega}, [\omega]^{<\omega})$ Ramsey,
 but every Borel set in this topology is Ramsey as well.
Next, Silver \cite{silver} showed that every analytic
 set in the topology $\Sigma([\omega]^{<\omega}, [\omega]^{<\omega})$ is Ramsey.
Ellentuck generalized this further \cite{ellentuck}
 by showing that every
 analytic $\mc{S}$ in the topology
 $\Sigma([\omega]^{<\omega}, [\omega]^{\le \omega})$
 is Ramsey.
Assuming the Axiom of Choice, there exists a set
 $\mc{S} \subseteq [\omega]^\omega$ that is not Ramsey.
Moreover,
 Silver \cite{silver} showed that it is consistent with $\zfc$
 that there is a logically simple,
 in fact ${\Delta}^1_2$, set $\mc{S} \subseteq [\omega]^\omega$
 that is not Ramsey.
On the other hand \cite{kan}, if we assume the existence
 of large cardinals, then every $\mc{S} \subseteq [\omega]^\omega$
 that is in $L(\mbb{R})$ is Ramsey,
 where $L(\mbb{R})$ is the smallest model of $\zf$
 that contains $\mbb{R}$ and all the ordinals.
Let us also mention that Shelah \cite{sh}
 has shown that if $\kappa$ is a Ramsey cardinal
 and $c : [\kappa]^\omega \to 2$ is Borel
 in a certain topology,
 then there is a set $H \in [\kappa]^\kappa$
 such that $|c``[H]^\omega| = 1$.

It is natural to ask what sets
 $\mc{S} \subseteq [\kappa]^\kappa$ for $\kappa > \omega$
 are Ramsey.
The standard argument that there is a set
 $\mc{S} \subseteq [\omega]^\omega$ that is not Ramsey
 shows that when $\kappa > \omega$,
 there is a set $\mc{S} \subseteq [\kappa]^{\kappa}$
 in $\Delta([\kappa]^{\omega}, [\kappa]^{<\kappa})$
 that is not Ramsey
 (see Proposition~\ref{acmakesitfail}).
In Section~\ref{seciton_coarse}
 we make the main contribution of this paper
 and show that when $\gamma < \kappa$,
 then all $\Delta(
 [\kappa]^{<\gamma},
 [\kappa]^{<\gamma})$ sets are Ramsey.
It is open whether $\Delta$ can be replaced with $\Sigma$.

Then, when we increase the $\mc{B}$ component of the
 patterns to include all size $<\kappa$ sets,
 we must simultaneously decrease the $\mc{A}$ component.
In Section~\ref{section_wc}, we show that the following
 are equivalent for a cardinal $\kappa > \omega$:
\begin{itemize}
\item $\kappa$ is weakly compact;
\item All $\Delta([\kappa]^2, [\kappa]^{<\kappa})$
 sets are Ramsey;
\item All $\Sigma([\kappa]^2, [\kappa]^{<\kappa})$
 sets are Ramsey;
\item $(\forall n \in \omega)$ all
 $\Sigma([\kappa]^n, [\kappa]^{<\kappa})$
 sets are Ramsey;
\end{itemize}
The main technique of the section
 is a shrinking procedure.
Here is the basic version:
 fix a set
 of $([\kappa]^2, [\kappa]^{<\kappa})$-patterns
 $\mc{Q}$ and a set
 $H \in [\kappa]^\kappa$ such that for each
 $A \in [H]^2$, there is some $B_A$ such that
 $(A,B_A) \in \mc{Q}$.
Then there is some
 $H' \in [H]^\kappa$ such that for all distinct
 $a_1, a_2 \in H'$, the first element of $H'$
 greater than $a_1$ and $a_2$ is also
 greater than all elements of
 $B_{\{a_1, a_2\}}$.
Each $X \in [H']^\kappa$
 will match $(A,B_A)$, where $A$ is the set of
 the first two elements of $X$.
We will modify this procedure in the following section.

In Section~\ref{section_ramsey},
 we strengthen the $\mc{A}$ component of the patterns and
 show that
 if $\kappa$ is a Ramsey cardinal, then all
 $\Sigma([\kappa]^{<\omega}, [\kappa]^{<\kappa})$
 sets are Ramsey.
In Section~\ref{section_measurable},
 we strengthen the $\mc{B}$ component of the patterns and
 show that
 if $\kappa$ is a measurable cardinal with a $\kappa$-complete
 ultrafilter $\mc{U}$, then all
 $\Sigma([\kappa]^{<\omega}, \mc{P}(\kappa) - \mc{U})$ sets
 are Ramsey.
Finally, in Section~\ref{section_constructible}
 we consider sets of patterns that are within $L$,
 assuming $0^\#$ exists.

\section{All $\Delta(
 [\kappa]^{<\gamma},
 [\kappa]^{<\gamma})$
 sets are Ramsey if $\gamma < \kappa$}
\label{seciton_coarse}

Temporarily fix cardinals $\gamma < \kappa$.
We call $\Sigma([\kappa]^{<\gamma}, [\kappa]^{<\gamma})$
 the ${{<}\gamma}$-\textit{box topology};
 it is indeed a topology,
 and basic open sets are ``boxes'' determined by
 specifying membership requirements for ${<}\gamma$
 elements of $\kappa$.
We have that
 $$\Sigma([\kappa]^{<\gamma}, [\kappa]^{<\gamma}) \subseteq
   \Sigma([\kappa]^{<\kappa}, [\kappa]^{<\kappa}).$$
It turns out that because
 $\Sigma([\kappa]^{<\gamma}, [\kappa]^{<\gamma})$ is so coarse,
 all
 $\Delta([\kappa]^{<\gamma}, [\kappa]^{<\gamma})$ sets are Ramsey.
This follows from the next theorem:

\begin{theorem}
\label{theorem2}
Let $\gamma < \kappa$ be infinite cardinals.
Let $c : [\kappa]^\kappa \to \gamma$ be continuous,
 where $[\kappa]^\kappa$ is given the topology
 $\Sigma([\kappa]^{<\gamma}, [\kappa]^{<\gamma})$
 and $\gamma$ is given the discrete topology.
Then there is some $H \in [\kappa]^\kappa$ that is
 homogeneous for $c$,
 where $|\kappa - H| \le \gamma$.
If $\gamma$ is a regular cardinal,
 we can get an $H$ such that
 $|\kappa - H| < \gamma$.
\end{theorem}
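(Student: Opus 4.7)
My plan is to reduce the theorem to finding a set $S \in [\kappa]^{\leq\gamma}$ (or $S \in [\kappa]^{<\gamma}$ when $\gamma$ is regular) such that $c$ is constant on $\{Y \in [\kappa]^\kappa : Y \cap S = \emptyset\}$; setting $H := \kappa \setminus S$ then yields the conclusion, since $[H]^\kappa \subseteq \{Y : Y \cap S = \emptyset\}$, $|H| = \kappa$ (as $|S| \leq \gamma < \kappa$), and $|\kappa - H| = |S|$.

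The first step is to extract, from continuity of $c$ together with discreteness of $\gamma$, for each $Y \in [\kappa]^\kappa$ a basic open neighborhood $[A_Y; B_Y]$ with $A_Y, B_Y \in [\kappa]^{<\gamma}$, $A_Y \subseteq Y$, $B_Y \cap Y = \emptyset$, on which $c$ takes the constant value $c(Y)$. A useful immediate consequence is that $c(Y) = c(\kappa \setminus B_Y)$, since $\kappa \setminus B_Y$ itself lies in $[A_Y; B_Y]$; so every value of $c$ is attained already on a co-$({<}\gamma)$ set.

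To construct $S$, I plan a transfinite recursion. Set $S_0 := A_\kappa$, so that $c \equiv v_0 := c(\kappa)$ on $\{Y : A_\kappa \subseteq Y\}$. At stage $\alpha + 1$, let $Y_\alpha := \kappa \setminus S_\alpha$ (still of cardinality $\kappa$) and read off its support $(A_{Y_\alpha}, B_{Y_\alpha})$; by the choices, $A_{Y_\alpha} \cap S_\alpha = \emptyset$ and $B_{Y_\alpha} \subseteq S_\alpha$. Set $S_{\alpha+1} := S_\alpha \cup A_{Y_\alpha}$, and take unions at limits. Each step adds fewer than $\gamma$ elements, so $|S_\gamma| \leq \gamma$; when $\gamma$ is regular, $|S_\alpha| < \gamma$ already at every stage $\alpha < \gamma$.

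The main obstacle is showing the recursion succeeds, i.e.\ that at some stage $\alpha^* \leq \gamma$ (respectively $<\gamma$ for regular $\gamma$), $c$ is actually constant on the whole set $\{Y \in [\kappa]^\kappa : Y \cap S_{\alpha^*} = \emptyset\}$, not merely on the chosen sequence $(Y_\alpha)$. The key structural observation I would exploit is that whenever $\alpha < \beta$ have $c(Y_\alpha) \neq c(Y_\beta)$, the disjointness of the clopen sets $c^{-1}(c(Y_\alpha))$ and $c^{-1}(c(Y_\beta))$ together with $A_{Y_\beta} \cap S_\beta = \emptyset$ (which kills the alternative $A_{Y_\beta} \cap B_{Y_\alpha} \neq \emptyset$) compels $B_{Y_\beta}$ to hit $A_{Y_\alpha}$; since the $A_{Y_\alpha}$'s are pairwise disjoint by construction, $|B_{Y_\beta}|$ is bounded below by the number of earlier stages of distinct color, while $|B_{Y_\beta}| < \gamma$ caps this spread. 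From this pigeonhole I aim to derive that the colors $c(Y_\alpha)$ stabilize at some value $v$, and then, for an arbitrary $Y$ with $Y \cap S = \emptyset$, that its own support $(A_Y, B_Y)$ is forced by the same clopen-disjointness into a configuration giving $c(Y) = v$.
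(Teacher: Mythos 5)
Your overall shape (build a small set $S$ by transfinite recursion, exploit that the added $A$-parts are pairwise disjoint while every $B$-part has size $<\gamma$) is close to the paper's, but the two steps you leave open are exactly where the work lies, and the first fails as stated. The constraint you extract --- for each $\beta$, the set $\{\alpha<\beta : c(Y_\alpha)\neq c(Y_\beta)\}$ has size at most $|B_{Y_\beta}|<\gamma$ --- does \emph{not} imply that the colors stabilize: a sequence in which the values $c(Y_\alpha)$, $\alpha<\gamma$, are pairwise distinct satisfies it (each initial segment of $\gamma$ has size $<\gamma$), so no stabilization follows from this pigeonhole alone. The missing idea is to compare \emph{all} earlier stages with one $\kappa$-sized set avoiding everything relevant chosen so far. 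For instance, apply your support extraction once more at stage $\gamma$ to $Y_\gamma:=\kappa\setminus S_\gamma$: since $A_{Y_\gamma}\cap B_{Y_\alpha}=\emptyset$ for every $\alpha<\gamma$, any $\alpha$ with $c(Y_\alpha)\neq c(Y_\gamma)$ forces $A_{Y_\alpha}\cap B_{Y_\gamma}\neq\emptyset$, so all but $<\gamma$ many stages carry the color $w:=c(Y_\gamma)$; then your intended final argument (an arbitrary $Y$ disjoint from $S_\gamma$ with $c(Y)\neq w$ would need $B_Y$ to meet $\gamma$ many pairwise disjoint $A_{Y_\alpha}$'s) does give constancy on $\{Y: Y\cap S_\gamma=\emptyset\}$. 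This terminal comparison is precisely what the paper's proof supplies, with the role of $Y_\gamma$ played by a set disjoint from $\bigcup_{\alpha<\gamma}B_\alpha$.

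Even with that repair, your recursion runs for all $\gamma$ stages, so it yields only $|\kappa-H|\le\gamma$; nothing in your plan produces the stronger bound $|\kappa-H|<\gamma$ when $\gamma$ is regular. Your observation that $|S_\alpha|<\gamma$ at every stage $\alpha<\gamma$ helps only if you can show box-constancy is reached at some stage strictly below $\gamma$, and your deterministic choice of witness (always $\kappa\setminus S_\alpha$) gives no leverage for that: at stage $\beta$ a witness $Z$ against constancy merely needs $|B_Z|\ge|\beta|$, which is compatible with $|B_Z|<\gamma$, so no contradiction arises before stage $\gamma$. The paper gets early termination by a different mechanism: it enumerates $\gamma$ as $\langle c_\alpha:\alpha<\gamma\rangle$ with every value listed $\gamma$ times, at stage $\alpha$ tests whether $c$ is constantly $c_\alpha$ on the current box and, if not, adds the $A$-part of the support of a witness $X$ with $c(X)\neq c_\alpha$; if no stage $<\gamma$ terminates, it takes $X$ disjoint from $\bigcup_{\alpha<\gamma}B_\alpha$ and chooses $\alpha$ with $c_\alpha=c(X)$ and $A_\alpha\cap B=\emptyset$ (possible because each value recurs $\gamma$ times while only $<\gamma$ many $A_\alpha$ meet $B$), reaching a contradiction. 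Some such device --- varying the target color and the witness rather than only shrinking the complement --- is needed for the regular-case conclusion.
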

\begin{proof}
We will find a set $B \in [\kappa]^{\le \gamma}$
 such that $c \restriction [0;B]$ is constant.
If $\gamma$ is regular, we will have
 $|B| < \gamma$.
Let $\langle c_\alpha : \alpha < \gamma \rangle$
 be an enumeration of $\gamma$ where each ordinal
 is listed $\gamma$ times.
We will construct $A_\alpha, B_\alpha \in [\kappa]^{<\gamma}$
 for $\alpha < \gamma$ such that
 $A_\alpha \cap B_\alpha = \emptyset$ and the sets
 $A_\alpha$ are pairwise disjoint.
At stage $\alpha < \gamma$,
 let $B = \bigcup_{\beta < \alpha} A_\beta$.
Note that $|B| \le \gamma$, and if $\gamma$ is regular,
 then $|B| < \gamma$.
There are two possibilities.

Case 1.
If $c \restriction [\emptyset;B]$ is constantly $c_\alpha$,
 then terminate the construction.

Case 2.
Fix some $X \in [\emptyset;B]$ such that $c(X)
 \not= c_\alpha$.
Let $d_\alpha = c(X)$.
Since $c$ is continuous,
 fix disjoint $A_\alpha, B_\alpha$ such that
 $X \in [A_\alpha;B_\alpha]$ and
 $c \restriction [A_\alpha;B_\alpha]$ is constantly $d_\alpha$.
Note that since $A_\alpha \subseteq X$
 and $X \cap B = \emptyset$,
 $A_\alpha$ is disjoint from each
 $A_\beta$ for $\beta < \alpha$.

We claim that the construction must terminate
 before stage $\gamma$.
Suppose that this is not the case.
Fix $X \in [\emptyset; \bigcup_{\alpha < \gamma} B_\alpha]$.
Fix disjoint $A,B \in [\kappa]^{<\gamma}$
 such that $X \in [A;B]$ and
 $c \restriction [A;B]$ is constantly $c(X)$.
Only $< \gamma$ many $A_\alpha$'s can intersect $B$,
 because the $A_\alpha$'s are pairwise disjoint.
Fix $\alpha < \gamma$ such that $A_\alpha$ is disjoint
 from $B$ and $c_\alpha = c(X)$.
Since $A \subseteq X$,
 $A$ is disjoint from $B_\alpha$.
We now have that
 $A$ and $A_\alpha$
 are each disjoint from $B$ and $B_\alpha$.
Thus,
 $(A \cup A_\alpha , B \cup B_\alpha)$ is a pattern.
We now have that $c$ is constantly $c(X)$ on
 $[A;B]$ and it is constantly
 $d_\alpha \not= c_\alpha = c(X)$
 on $[A_\alpha;B_\alpha]$.
But since
 $$[A \cup A_\alpha ; B \cup B_\alpha] \subseteq
 [A;B] \cap [A_\alpha;B_\alpha],$$
this is impossible.
\end{proof}

An important fact used in the proof above
 is that the coloring is
 $\Delta([\kappa]^{<\gamma}, [\kappa]^{<\gamma})$,
 as opposed to just
 $\Sigma([\kappa]^{<\gamma}, [\kappa]^{<\gamma})$.
We ask whether these more general sets are Ramsey:

\begin{question}
Let $\gamma < \kappa$ be infinite cardinals.
Is every
 $\Sigma([\kappa]^{<\gamma}, [\kappa]^{<\gamma})$
 set Ramsey?
In particular, is every
 $\Sigma([\omega_1]^2, [\omega_1]^1)$ set Ramsey?
If $\kappa$ is a measurable cardinal,
 is every $\Sigma([\kappa]^\omega, [\kappa]^1)$
 set Ramsey?
\end{question}

In the conclusion of the previous theorem,
 $H$ satisfies $|\kappa - H| \le \gamma$.
This allows us to simultaneously homogenize
 $< \kappa$ sets that are all
 $\Delta([\kappa]^{<\gamma}, [\kappa]^{<\gamma})$.


\section{All $\Sigma([\kappa]^2, [\kappa]^{<\kappa})$
 sets are Ramsey iff $\kappa$ is weakly compact}
\label{section_wc}

If $\kappa$ is not a weakly compact cardinal,
 then there is a coloring of $[\kappa]^2$ such that
 there is no $H \in [\kappa]^\kappa$ all of whose
 pairs are the same color.
The collection $\Sigma([\kappa]^2, [\kappa]^{<\kappa})$
 is fine enough to allow the following:
\begin{observation}
\label{fineenoughforpairs}
For each pair $\{ a_1, a_2 \} \in [\kappa]^2$,
 there is a $([\kappa]^2, [\kappa]^{<\kappa})$-pattern
 $(A,B)$ such that a set $X \in [\kappa]^\kappa$
 matches $(A,B)$ iff its first two elements are
 $a_1$ and $a_2$.
\end{observation}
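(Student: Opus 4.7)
The plan is to exhibit the pattern $(A,B)$ explicitly. Assume without loss of generality $a_1 < a_2$. Take $A = \{a_1, a_2\}$ and $B = a_2 \setminus \{a_1\}$, i.e.\ $B$ is the set of all ordinals strictly less than $a_2$ other than $a_1$. Then $A \cap B = \emptyset$, $A \in [\kappa]^2$, and $|B| \le |a_2| < \kappa$ (since $a_2 < \kappa$ and $\kappa$ is a cardinal), so $B \in [\kappa]^{<\kappa}$ and $(A,B)$ is a valid $([\kappa]^2, [\kappa]^{<\kappa})$-pattern.

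Next I would verify the biconditional. For the forward direction, if $X \in [\kappa]^\kappa$ matches $(A,B)$, then $a_1, a_2 \in X$ and $X$ contains no ordinal below $a_2$ other than $a_1$; hence the two smallest elements of $X$ are exactly $a_1$ and $a_2$. For the reverse direction, if the first two elements of $X$ are $a_1$ and $a_2$, then $\{a_1,a_2\} = A \subseteq X$, and every other element of $X$ is $> a_2$, so $X \cap B = \emptyset$.

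There is no real obstacle here; the content of the observation is just that the topology $\Sigma([\kappa]^2, [\kappa]^{<\kappa})$ is fine enough to pin down the identities of the first two elements of $X$, because doing so requires naming only the two positive requirements $a_1, a_2$ and the $<\kappa$-many negative requirements given by the ordinals $<a_2$. The only thing to be careful about is ensuring $|B|<\kappa$, which is immediate from $a_2 \in \kappa$.
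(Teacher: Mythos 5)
Your construction is correct and is exactly the intended argument: the paper states this observation without proof, and the witness $A=\{a_1,a_2\}$, $B=a_2\setminus\{a_1\}$ (which is a subset of $a_2$, hence of order type $<\kappa$) is the obvious one, with both directions of the biconditional verified as you do. No issues.
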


This allows us to color a set $X \in [\kappa]^\kappa$
 based on its first two elements.

\begin{proposition}
Let $\kappa$ be an infinite cardinal that is not
 weakly compact.
Then there is a set in
 $\Delta([\kappa]^2, [\kappa]^{<\kappa})$
 that is not Ramsey.
\end{proposition}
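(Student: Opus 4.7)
The plan is to exploit the non-weak-compactness of $\kappa$ directly via the failure of $\kappa \to (\kappa)^2_2$, which gives a coloring $c : [\kappa]^2 \to 2$ with no $H \in [\kappa]^\kappa$ monochromatic for $c$. I would then use Observation~\ref{fineenoughforpairs} to translate this into a coloring of $[\kappa]^\kappa$ of the right definability class.

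Concretely, define $\mathcal{S} \subseteq [\kappa]^\kappa$ to be the set of $X$ whose first two elements $a_1 < a_2$ satisfy $c(\{a_1, a_2\}) = 0$. To see $\mathcal{S} \in \Delta([\kappa]^2, [\kappa]^{<\kappa})$, observe that for each pair $a_1 < a_2$ in $\kappa$, letting $A = \{a_1, a_2\}$ and $B = a_2 \setminus \{a_1\}$, the set $X \in [\kappa]^\kappa$ matches $(A,B)$ iff $X \cap (a_2+1) = \{a_1, a_2\}$, i.e.\ iff $a_1, a_2$ are the first two elements of $X$. Here $A \in [\kappa]^2$ and $B \in [\kappa]^{<\kappa}$, since $|a_2| < \kappa$. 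Then
\[
\mathcal{S} = \bigcup \{[A;B] : c(\{a_1,a_2\}) = 0\}, \qquad [\kappa]^\kappa \setminus \mathcal{S} = \bigcup \{[A;B] : c(\{a_1,a_2\}) = 1\},
\]
both of which are in $\Sigma([\kappa]^2, [\kappa]^{<\kappa})$, so $\mathcal{S} \in \Delta([\kappa]^2, [\kappa]^{<\kappa})$.

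Finally, I would show $\mathcal{S}$ is not Ramsey. If $H \in [\kappa]^\kappa$ were homogeneous for $\mathcal{S}$, then for every pair $\{b_1, b_2\} \in [H]^2$ we could build an $X \in [H]^\kappa$ whose first two elements are $b_1, b_2$ by choosing the remaining elements of $X$ from the tail of $H$ above $b_2$, which is possible because $|H| = \kappa$. Homogeneity of $H$ for $\mathcal{S}$ would then force $c$ to be constant on $[H]^2$, contradicting the choice of $c$.

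There is no real obstacle: the only thing to check carefully is that the explicit pattern $(A,B)$ with $A = \{a_1,a_2\}$ and $B = a_2 \setminus \{a_1\}$ correctly isolates ``$a_1, a_2$ are the first two elements of $X$,'' and that this places $\mathcal{S}$ in $\Delta$ rather than merely $\Sigma$; both halves of the dichotomy on $c$ produce $\Sigma$-descriptions of the same shape, so closure under complement is automatic.
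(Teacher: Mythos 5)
Your proposal is correct and follows essentially the same route as the paper: color $[\kappa]^\kappa$ by the $c$-color of the first two elements, note both the set and its complement are unions of the ``first two elements are $a_1,a_2$'' basic sets (your explicit pattern $A=\{a_1,a_2\}$, $B=a_2\setminus\{a_1\}$ is exactly what the paper's Observation~\ref{fineenoughforpairs} provides), and derive non-Ramseyness from the failure of $\kappa \to (\kappa)^2_2$.
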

\begin{proof}
Since $\kappa$ is not weakly compact,
 fix a coloring $c : [\kappa]^2 \to 2$
 such that there is no $H \in [\kappa]^2$
 satisfying $|c``[H]^2| = 1$.
Using the observation above,
 let $\mc{S} \in \Delta([\kappa]^2, [\kappa]^{<\kappa})$
 be the unique subset of $[\kappa]^\kappa$
 such that for each
 $X \in [\kappa]^\kappa$, we have
 $X \in \mc{S}$ iff
 $c(\{a_1, a_2\}) = 1$,
 where $a_1, a_2$ are the first two elements of $X$.
To see that $\mc{S}$ is indeed
 $\Delta([\kappa]^2, [\kappa]^{<\kappa})$,
 consider the first two elements $a_1, a_2$ of $X$.
If $c( \{ a_1, a_2 \} ) = 1$, then there is a
 $([\kappa]^2, [\kappa]^{<\kappa})$-pattern
 which witnesses that $X \in \mc{S}$.
If $c( \{ a_1, a_2 \} ) = 0$, then there is a
 $([\kappa]^2, [\kappa]^{<\kappa})$-pattern
 which witnesses that $X \not\in \mc{S}$.

One can see that given any
 $H \in [\kappa]^\kappa$,
 there are $X_1, X_2 \in [H]^\kappa$
 such that $X_1 \in \mc{S}$ and $X_2 \not\in \mc{S}$.
Hence, $\mc{S}$ is not Ramsey.
\end{proof}

On the other hand,
 we will show that if $\kappa$ is weakly compact,
 then every $\Sigma([\kappa]^2, [\kappa]^{<\kappa})$
 set is Ramsey.
We will use the following shrinking procedure,
 which we isolate here for clarity.

In the following setup,
 we do not actually need
 each $A \in [X]^n$ to have an associated $B_A$.
All we need is that for each
 $X' \in [X]^\kappa$,
 there is some $\alpha < \kappa$ such that
 $A := X' \cap \alpha$
 has an associated $B_A$.
However, we will not need this generality.

\begin{definition}
Let $X \in [\kappa]^\kappa$ and
 $\mc{Q}$ be a set of patterns.
Fix $n \in \omega$.
Suppose for each $A \in [X]^n$
 there is a set $B_A$ such that
 $(A,B_A) \in \mc{Q}$.
We say that $X$ is
 \defemph{fast} for
 $A \mapsto B_A$ iff
 for each $A \in [X]^n$,
 the only elements of $B_A \cap X$
 are $< \sup A$.
\end{definition}

\begin{lemma}
\label{fastworks}
Let $X, \mc{Q}, n$ be as in the definition above,
 where each $A \in [X]^n$ has an associated $B_A$.
Suppose $X$ is fast for $A \mapsto B_A$.
Then every $X' \in [X]^\kappa$ matches
 some pattern in $\mc{Q}$.
\end{lemma}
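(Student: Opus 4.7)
The plan is to exhibit, for each $X' \in [X]^\kappa$, an explicit pattern in $\mc{Q}$ that $X'$ matches, namely the pattern $(A, B_A)$ where $A$ is the set of the $n$ least elements of $X'$.

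First I would fix $X' \in [X]^\kappa$ and let $A$ consist of the $n$ least elements of $X'$ in the ordering inherited from $\kappa$. Since $X' \subseteq X$, we have $A \in [X]^n$, so $B_A$ is defined, $(A, B_A) \in \mc{Q}$, and $A \cap B_A = \emptyset$. By construction $A \subseteq X'$, so to verify that $X'$ matches $(A, B_A)$ it remains only to check that $B_A \cap X' = \emptyset$.

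The core of the argument is to combine two observations. First, by the choice of $A$, every element of $X' \setminus A$ is strictly greater than $\sup A$; equivalently, $X' \cap \sup A \subseteq A$. Second, the fastness of $X$ for $A \mapsto B_A$ says that every element of $B_A \cap X$ is $< \sup A$, and since $X' \subseteq X$, this yields $B_A \cap X' \subseteq \sup A$. Combining them, any putative $\beta \in B_A \cap X'$ would satisfy $\beta < \sup A$ and $\beta \in X'$, hence $\beta \in A$; but $A \cap B_A = \emptyset$, so no such $\beta$ exists.

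I do not expect a serious obstacle here; the lemma is essentially a bookkeeping statement that packages the fastness condition into its intended use, and the main thing to track is the interplay between $\sup A$ and the position of $A$ inside $X'$. The one sanity check worth noting is the degenerate case $n = 0$, where $A = \emptyset$ and $\sup A = 0$, so fastness forces $B_\emptyset \cap X = \emptyset$ and every $X' \in [X]^\kappa$ vacuously matches $(\emptyset, B_\emptyset)$.
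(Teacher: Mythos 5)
Your proof is correct and follows essentially the same route as the paper: take $A$ to be the $n$ least elements of $X'$, use fastness plus $X' \subseteq X$ to get $B_A \cap X' \subseteq \sup A$, and note that elements of $X'$ below $\sup A$ lie in $A$, which is disjoint from $B_A$. The remark about the $n=0$ case is harmless extra bookkeeping not needed in the paper.
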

\begin{proof}
Consider any $X' \in [X]^\kappa$.
Let $A \in [X']^n$ be the first $n$
 elements of $X'$.
Consider the set $B_A \cap X'$.
The only elements of $B_A \cap X$
 are $< \sup A$, so therefore
 the only elements of $B_A \cap X'$ are $< \sup A$.
On the other hand, the only elements of $X'$
 that are $< \sup A$ are the elements of
 $A$ themselves, and we have that
 $B_A \cap A = \emptyset$.
Thus, $B_A \cap X' = \emptyset$,
 which shows that $X'$ matches $(A,B_A)$.
\end{proof}

To produce an $X' \in [X]^\kappa$ that is fast for
 $A \mapsto B_A$, we shrink $X$ by subtracting the
 final parts of the $B_A$'s from $X$.

\begin{lemma}
\label{fastconstruction}
Let $X \in [\kappa]^\kappa$,
 $n \in \omega$,
 and $\mc{Q}$ be a set of
 $([\kappa]^n, [\kappa]^{<\kappa})$-patterns.
Assume that each $A \in [X]^n$
 has an associated $B_A$ such that
 $(A,B_A) \in \mc{Q}$.
Then there is some $X' \in [X]^\kappa$
 that is fast for
 $A \mapsto B_A$.
\end{lemma}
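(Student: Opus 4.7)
The plan is to construct $X'$ as the range of a strictly increasing $\kappa$-sequence $\langle y_\alpha : \alpha < \kappa\rangle$ of elements of $X$, built by transfinite recursion. At stage $\alpha < \kappa$, with $Y_\alpha := \{y_\beta : \beta < \alpha\}$ already chosen, I put
$$Z_\alpha := \bigcup_{A \in [Y_\alpha]^n} B_A$$
and take $y_\alpha$ to be any element of $X$ satisfying $y_\alpha > \sup Y_\alpha$ and $y_\alpha \notin Z_\alpha$. This implements the ``shrinking'' foreshadowed before the lemma: at each stage I remove from $X$ the portions of the previously committed $B_A$'s that are still in play.

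The existence of such a $y_\alpha$ is where the (implicit) assumption that $\kappa$ is regular enters, and this is appropriate since the lemma will be applied in the next section with $\kappa$ weakly compact, hence inaccessible. Under regularity, $|[Y_\alpha]^n| \le |\alpha|^n < \kappa$ and each $|B_A| < \kappa$, so $|Z_\alpha| < \kappa$ and $\sup Y_\alpha < \kappa$. Because $|X| = \kappa$, this leaves $\kappa$-many legal choices for $y_\alpha$, so the recursion proceeds through all $\kappa$ stages.

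Setting $X' := \{y_\alpha : \alpha < \kappa\}$, verifying that $X'$ is fast takes only a few lines. Given $A \in [X']^n$, enumerate $A = \{y_{\beta_1}, \ldots, y_{\beta_n}\}$ with $\beta_1 < \cdots < \beta_n$, so $\sup A = y_{\beta_n}$. Suppose some $y_\gamma \in B_A \cap X'$ satisfies $y_\gamma \ge \sup A$, so that $\gamma \ge \beta_n$. The case $\gamma = \beta_n$ contradicts $A \cap B_A = \emptyset$ (patterns are disjoint pairs). The case $\gamma > \beta_n$ gives $A \in [Y_\gamma]^n$, hence $B_A \subseteq Z_\gamma$, contradicting the choice $y_\gamma \notin Z_\gamma$. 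The only obstacle I foresee is the cardinality bookkeeping justifying the choice of $y_\alpha$: it is clean when $\kappa$ is regular but would break for singular $\kappa$, where the $Z_\alpha$ may already be cofinal in $\kappa$ by stage $\cf(\kappa)$.
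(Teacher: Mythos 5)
Your proof is correct and is essentially the paper's own argument: the paper also thins $X$ by a transfinite/recursive procedure, fixing a function $f:\kappa\to\kappa$ with $\sup(B_A) < f(\alpha)$ for all $A \in [\alpha]^n$ and passing to an $X'$ whose elements dominate the $f$-values of earlier $n$-tuples, which is the same idea as your recursion avoiding $Z_\alpha$ (you avoid membership in the $B_A$'s, the paper jumps above their suprema). The regularity of $\kappa$ that you flag is equally implicit in the paper's proof (it is needed there for $f(\alpha)<\kappa$) and is harmless, since the lemma is only applied with $\kappa$ weakly compact, Ramsey, or measurable.
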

\begin{proof}
Fix a function $f : \kappa \to \kappa$ such that
 for each $\alpha$ and $A \in [\alpha]^n$,
 $\sup(B_A) < f(\alpha)$.
Thin down $X$ to produce an $X'$ that satisfies
 $f(A) < y$ for all $A \in [X']^n$ and $y \in X'$
 such that $A < y$.
This works.
\end{proof}

Here is the promised result.

\begin{proposition}
\label{positiveweaklycompact}
Let $\kappa$ be a weakly compact cardinal.
Then every $\Sigma([\kappa]^2, [\kappa]^{<\kappa})$
 set is Ramsey.
\end{proposition}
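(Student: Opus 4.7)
The plan is to use weak compactness to homogenize a natural $2$-coloring derived from the generating set of patterns, and then apply the shrinking lemmas just established to land in one of the two homogeneous cases cleanly.

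First, fix a set $\mc{Q}$ of $([\kappa]^2, [\kappa]^{<\kappa})$-patterns that generates $\mc{S}$. Define $c : [\kappa]^2 \to 2$ by setting $c(A) = 1$ if there exists some $B$ with $(A,B) \in \mc{Q}$, and $c(A) = 0$ otherwise. Since $\kappa$ is weakly compact, fix $H_0 \in [\kappa]^\kappa$ on which $c$ is constant. This splits the argument into two cases depending on the constant value.

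In the case $c \equiv 0$ on $[H_0]^2$: any $X \in [H_0]^\kappa$ that belonged to $\mc{S}$ would match some $(A, B) \in \mc{Q}$ with $A \subseteq X \subseteq H_0$, giving $A \in [H_0]^2$ with $c(A) = 1$, a contradiction. Hence $[H_0]^\kappa \cap \mc{S} = \emptyset$, so $H_0$ witnesses Ramseyness via clause (2). In the case $c \equiv 1$ on $[H_0]^2$: for each $A \in [H_0]^2$ use choice to fix some $B_A$ with $(A, B_A) \in \mc{Q}$. Apply Lemma~\ref{fastconstruction} with $n = 2$ and $X = H_0$ to obtain $H \in [H_0]^\kappa$ that is fast for $A \mapsto B_A$. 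By Lemma~\ref{fastworks}, every $X' \in [H]^\kappa$ matches the pattern $(A_{X'}, B_{A_{X'}}) \in \mc{Q}$, where $A_{X'}$ is the pair of first two elements of $X'$; hence $[H]^\kappa \subseteq \mc{S}$, and $H$ witnesses Ramseyness via clause (1).

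There is really no obstacle here: the entire section has been leading up to this synthesis, and the shrinking lemmas already do the substantive work. The only mild point to verify is that in the first case the exhaustive quantifier over $A \in [H_0]^2$ is enough to forbid membership in $\mc{S}$ — which works because patterns in $\mc{Q}$ have first coordinate of size exactly $2$, so any matching pair is forced into $[H_0]^2$.
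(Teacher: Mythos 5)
Your proposal is correct and matches the paper's own proof essentially step for step: homogenize the same coloring $c$ via weak compactness, observe that the color-$0$ case immediately gives a homogeneous set avoiding $\mc{S}$, and in the color-$1$ case apply Lemma~\ref{fastconstruction} followed by Lemma~\ref{fastworks}. Nothing to add.
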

\begin{proof}
Fix $\mc{S} \subseteq [\kappa]^\kappa$
 in $\Sigma([\kappa]^2, [\kappa]^{<\kappa})$.
Let $\mc{Q}$ be 
 a set of
 $([\kappa]^2, [\kappa]^{<\kappa})$-patterns
 which generate $\mc{S}$.
For each $A \in [\kappa]^2$,
 if there is some $B \in [\kappa]^{<\kappa}$
 such that $(A,B) \in \mc{Q}$,
 then let $B_A$ be some such $B$.
Let $c : [\kappa]^2 \to 2$ be the following coloring:
 $$c(A) :=
 \begin{cases}
 1 & \mbox{if } (A,B) \in \mc{Q}
 \mbox{ for some } B, \\
 0 & \mbox{otherwise.}
 \end{cases}$$
Since $\kappa$ is weakly compact,
 let $H \in [\kappa]^\kappa$
 be homogeneous for $c$.
That is, all pairs from $H$ are assigned
 the same color by $c$.
If $c``[H]^2 = \{0\}$,
 then no subset of $H$ can match any pattern
 from $\mc{Q}$, so we are done.

If $c``[H]^2 = \{1\}$,
 then each $A \in [H]^2$ has an associated $B_A$.
Apply Lemma~\ref{fastconstruction}
 to get a set $H' \in [H]^\kappa$
 that is fast for $A \mapsto B_A$.
By Lemma~\ref{fastworks},
 each $X \in [H']^\kappa$ matches a pattern in $\mc{Q}$.
\end{proof}


If $\kappa$ is a weakly compact cardinal,
 then we have in fact that for every $n \in \omega$,
 $\lambda < \kappa$,
 and $d : [\kappa]^n \to \lambda$,
 there is some $H \in [\kappa]^\kappa$ satisfying
 $|d``[H]^n| = 1$.
Thus, the argument from the proposition above
 yields the following.
It implies, in particular, that if $\kappa$
 is weakly compact, then every set in
 $\Sigma([\kappa]^n, [\kappa]^{<\kappa})$
 for $n \in \omega$ is Ramsey.
\begin{proposition}
\label{wcmanyanyonce}
Let $\kappa$ be weakly compact and let
 $1 \le \lambda < \kappa$.
Let $c : [\kappa]^\kappa \to (\lambda+1)$ be such that
 for each $\alpha < \lambda$,
 $c^{-1}(\alpha) \in \Sigma([\kappa]^n,[\kappa]^{<\kappa})$.
Then $c$ is Ramsey.
\end{proposition}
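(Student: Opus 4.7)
The plan is to imitate the proof of Proposition~\ref{positiveweaklycompact}, exploiting the fact that a weakly compact cardinal $\kappa$ satisfies the partition relation $\kappa \to (\kappa)^n_\mu$ for all $n \in \omega$ and $\mu < \kappa$. For each $\alpha < \lambda$, fix a set $\mc{Q}_\alpha$ of $([\kappa]^n, [\kappa]^{<\kappa})$-patterns which generates $c^{-1}(\alpha)$. Define $d : [\kappa]^n \to (\lambda + 1)$ by letting $d(A)$ be the least $\alpha < \lambda$ such that there exists $B$ with $(A,B) \in \mc{Q}_\alpha$, if such an $\alpha$ exists, and $d(A) = \lambda$ otherwise; whenever $d(A) = \alpha < \lambda$, fix a witness $B_A$ satisfying $(A, B_A) \in \mc{Q}_\alpha$.

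Since $|\lambda + 1| < \kappa$, weak compactness produces $H \in [\kappa]^\kappa$ on which $d$ is constant, say with value $\alpha^* \le \lambda$. If $\alpha^* = \lambda$, I claim that no $X \in [H]^\kappa$ matches any pattern in any $\mc{Q}_\alpha$: for any candidate $(A, B) \in \mc{Q}_\alpha$ with $A \subseteq X$, the set $A$ is an $n$-element subset of $H$, so $d(A) = \lambda$, contradicting the existence of the witness $(A, B)$. Hence $c(X) = \lambda$ for every $X \in [H]^\kappa$, and $H$ witnesses that $c$ is Ramsey.

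If instead $\alpha^* < \lambda$, then every $A \in [H]^n$ has its associated $B_A$ with $(A, B_A) \in \mc{Q}_{\alpha^*}$. Applying Lemma~\ref{fastconstruction} yields $H' \in [H]^\kappa$ fast for $A \mapsto B_A$, and Lemma~\ref{fastworks} then shows that every $X \in [H']^\kappa$ matches some pattern in $\mc{Q}_{\alpha^*}$. Since $c$ is a function, this forces $c(X) = \alpha^*$ for all $X \in [H']^\kappa$, so $H'$ witnesses that $c$ is Ramsey.

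Essentially all of the work is done by the shrinking lemmas and the partition relation for weakly compact $\kappa$; the only delicate point is the $\alpha^* = \lambda$ case, where one has to rule out $X$ matching patterns whose first coordinate $A$ is \emph{not} an initial segment of $X$. The homogeneity of $d$ on \emph{all} of $[H]^n$, rather than just on first $n$-element subsets, handles this automatically.
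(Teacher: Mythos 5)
Your proof is correct and follows essentially the same route as the paper: define the auxiliary coloring $d$ on $[\kappa]^n$, apply weak compactness, handle the constant-value-$\lambda$ case by noting no $A \in [H]^n$ admits a witnessing $B$, and handle the case of a fixed $\alpha^* < \lambda$ via Lemmas~\ref{fastconstruction} and~\ref{fastworks}. The only (harmless) difference is that you take the \emph{least} $\alpha$ with a witness, whereas the paper observes that this $\alpha$ is in fact unique, since two patterns $(A,B_1) \in \mc{Q}_{\alpha_1}$, $(A,B_2) \in \mc{Q}_{\alpha_2}$ have a common matching set in $[A; B_1 \cup B_2]$.
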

\begin{proof}
Note that we make no requirements on the complexity
 of $c^{-1}(\lambda)$.
For each $\alpha < \lambda$,
 let $\mc{Q}_\alpha$ be the set of
 $([\kappa]^n, [\kappa]^{<\kappa})$-patterns
 which generate $c^{-1}(\alpha)$.
For each $A \in [\kappa]^n$,
 if there is some $B \in [\kappa]^{<\kappa}$ such that
 $(A,B) \in \mc{Q}_\alpha$ for some $\alpha$,
 then let $B_A$ be some such $B$.
Note that if $(A,B_1) \in \mc{Q}_{\alpha_1}$ and
 $(A,B_2) \in \mc{Q}_{\alpha_2}$, then
 $\alpha_1 = \alpha_2$.
Let $d : [\kappa]^n \to (\lambda+1)$
 be the following coloring:
 $$d(A) := \begin{cases}
 \alpha & \mbox{if } (A,B) \in \mc{Q}_\alpha
 \mbox{ for some } B, \\
 \lambda & \mbox{otherwise.}
 \end{cases}$$
Since $\kappa$ is weakly compact,
 let $H \in [\kappa]^\kappa$ be such that
 $|d``[H]^n| = 1$.

If $d``[H]^n = \{\lambda\}$,
 then consider any $X \in [H]^\kappa$.
For each $A \in [X]^n$,
 there is no $B$ such that
 $(A,B) \in \mc{Q}_\alpha$ for some $\alpha < \lambda$.
Hence, $X$ is not in any
 $c^{-1}(\alpha)$ for $\alpha < \lambda$.
Thus, $X \in c^{-1}(\lambda)$.
This shows that $H$ is homogeneous for $c$.

The other case is that
 $d``[H]^n = \{ \alpha \}$ for some fixed $\alpha < \lambda$.
That is, for each $A \in [H]^n$,
 $(A,B_A) \in \mc{Q}_\alpha$.
Apply Lemma~\ref{fastconstruction}
 to get a set $H' \in [H]^\kappa$
 that is fast for $A \mapsto B_A$.
By Lemma~\ref{fastworks},
 each $X \in [H']^\kappa$
 matches a pattern in $\mc{Q}$.
\end{proof}

\section{All $\Sigma([\kappa]^{<\omega}, [\kappa]^{<\kappa})$
 sets are Ramsey if $\kappa$ is Ramsey}
\label{section_ramsey}

The results in this section are analogous to those
 in the previous section, so we will only sketch the proofs.
Recall that $\kappa$ is a Ramsey cardinal iff given any
 $c : [\kappa]^{<\omega} \to 2$,
 there is some $H \in [\kappa]^\kappa$ such that for all
 $n \in \omega$,
 $|c``[H]^n| = 1$.
The following is analogous to
 Observation~\ref{fineenoughforpairs}:
\begin{observation}
\label{fineenoughfortuples}
For $A \in [\kappa]^n$,
 there is a $([\kappa]^n, [\kappa]^{<\kappa})$-pattern
 $(A,B)$ such that a set $X \in [\kappa]^\kappa$
 matches $(A,B)$ iff its first $n$ elements are
 the elements of $A$.
\end{observation}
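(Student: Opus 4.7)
The plan is to exhibit the pattern explicitly. Given $A = \{a_1 < a_2 < \cdots < a_n\} \in [\kappa]^n$, I would set $B := a_n \setminus A$, the set of all ordinals strictly less than $\max A$ that do not lie in $A$. This is the straightforward generalization of the construction implicit in Observation~\ref{fineenoughforpairs}.

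First I would verify that $(A,B)$ really is a $([\kappa]^n, [\kappa]^{<\kappa})$-pattern. Disjointness $A \cap B = \emptyset$ is immediate from the definition of $B$. We have $A \in [\kappa]^n$ by hypothesis. Finally, $B \subseteq a_n$, and since $a_n$ is an ordinal less than $\kappa$, $|B| \le |a_n| < \kappa$, so $B \in [\kappa]^{<\kappa}$.

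Next I would verify the biconditional. For the forward direction, if the first $n$ elements of $X$ in increasing order are $a_1, \ldots, a_n$, then $A \subseteq X$, and any element of $X$ below $a_n$ is among $a_1, \ldots, a_{n-1}$ and so lies in $A$; thus $X \cap B = \emptyset$, and $X$ matches $(A,B)$. Conversely, if $X$ matches $(A,B)$, then $A \subseteq X$, and $X$ contains no element of $a_n \setminus A$, so every element of $X$ that is $\le a_n$ lies in $A$; since $|A| = n$ and $a_n = \max A \in X$, the first $n$ elements of $X$ are exactly $a_1, \ldots, a_n$.

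There is no real obstacle here; the content of the observation is just identifying the right $B$, and the only point worth noting is that $B$ has size less than $\kappa$ because it is bounded in $\kappa$ by $a_n$.
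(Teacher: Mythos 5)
Your proof is correct, and it is exactly the construction the paper has in mind (the paper states this as an unproved observation): take $B$ to be the ordinals below $\max A$ not in $A$, which is bounded in $\kappa$ and hence of order type less than $\kappa$. Nothing further is needed.
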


We would like to say that if $\kappa$ is not a Ramsey
 cardinal, then there is some
 $\Delta([\kappa]^{<\omega}, [\kappa]^{<\kappa})$ set
 that is not Ramsey.
However, we know only the following assertion
 to be true:
\begin{proposition}
Let $\kappa$ be an infinite cardinal that is not Ramsey.
Then there are
 $\mc{S}_n \in \Delta([\kappa]^n, [\kappa]^{<\kappa})$
 for $n < \omega$ such that there is no
 $H \in [\kappa]^\kappa$ homogeneous for all
 $\mc{S}_n$.
\end{proposition}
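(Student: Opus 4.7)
The plan is to leverage Observation~\ref{fineenoughfortuples}, which lets us encode, via a $([\kappa]^n,[\kappa]^{<\kappa})$-pattern, the condition ``the first $n$ elements of $X$ are exactly $A$''. Since $\kappa$ is not Ramsey, fix a coloring $c : [\kappa]^{<\omega} \to 2$ such that no $H \in [\kappa]^\kappa$ satisfies $|c``[H]^n| = 1$ for all $n \in \omega$. For each $n$, I will use $c \restriction [\kappa]^n$ to define $\mc{S}_n$: declare $X \in \mc{S}_n$ iff $c(\{a_1,\ldots,a_n\}) = 1$, where $a_1 < \cdots < a_n$ are the first $n$ elements of $X$.

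The first step is to verify that each $\mc{S}_n$ lies in $\Delta([\kappa]^n, [\kappa]^{<\kappa})$. By Observation~\ref{fineenoughfortuples}, for each $A \in [\kappa]^n$ there is a $([\kappa]^n,[\kappa]^{<\kappa})$-pattern $(A,B_A)$ (take $B_A := \max(A) \setminus A$, which has size $<\kappa$) whose matching set is exactly $\{X \in [\kappa]^\kappa : A \text{ is the initial segment of } X\}$. These matching sets partition $[\kappa]^\kappa$ as $A$ ranges over $[\kappa]^n$, so
\[
\mc{S}_n = \bigcup_{A \in [\kappa]^n,\, c(A)=1} [A;B_A],
\qquad
[\kappa]^\kappa - \mc{S}_n = \bigcup_{A \in [\kappa]^n,\, c(A)=0} [A;B_A],
\]
showing both $\mc{S}_n$ and its complement lie in $\Sigma([\kappa]^n, [\kappa]^{<\kappa})$, as required.

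The second step is to show no single $H \in [\kappa]^\kappa$ can be homogeneous for every $\mc{S}_n$. Suppose for contradiction such an $H$ exists. Fix any $n$, and any $A \in [H]^n$. Because $H$ has size $\kappa$ and $\max(A) < \kappa$, there are $\kappa$-many elements of $H$ above $\max(A)$, so there exists $X \in [H]^\kappa$ whose initial $n$ elements are exactly $A$. By homogeneity of $H$ for $\mc{S}_n$, the value $c(A)$ therefore does not depend on the choice of $A \in [H]^n$; that is, $|c``[H]^n| = 1$ for every $n$, contradicting our choice of $c$.

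I do not anticipate a serious obstacle: the only subtlety is ensuring the encoding of ``first $n$ elements'' really gives a $([\kappa]^n,[\kappa]^{<\kappa})$-pattern, which is precisely the content of Observation~\ref{fineenoughfortuples}. Note that this argument does \emph{not} produce a single $\Delta$-set that is not Ramsey, because the $\mc{S}_n$ have arities $n$ that grow, and combining them into one set would seemingly require $[\kappa]^{<\omega}$ in the $\mc{A}$-component, explaining the gap remarked on in the excerpt.
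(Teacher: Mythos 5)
Your proof is correct and follows essentially the same route as the paper: define $\mc{S}_n$ from $c \restriction [\kappa]^n$ via the ``first $n$ elements'' patterns of Observation~\ref{fineenoughfortuples}, and observe that a common homogeneous $H$ would force $|c``[H]^n| = 1$ for every $n$, contradicting the choice of $c$. You simply spell out details the paper leaves implicit (the explicit pattern $(A,\max(A)-A)$ and the $\Delta$-complexity check), which is fine.
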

\begin{proof}
Let $c : [\kappa]^{<\omega} \to 2$ witness that $\kappa$
 is not Ramsey.
Using the observation above, for each $n \in \omega$,
 define $\mc{S}_n$ so that given any $X \in [\kappa]^\kappa$,
 $X \in \mc{S}$ iff the first $n$ elements of $X$
 are colored $1$ by $c$.
If $H \in [\kappa]^\kappa$ is a set which is homogeneous
 for each $\mc{S}_n$, then
 $|c``[H]^n| = 1$ for each $n$, which is a contradiction.
\end{proof}

The following is a straightforward modification
 of Proposition~\ref{positiveweaklycompact}:

\begin{proposition}
\label{ramseysaregood}
Let $\kappa$ be a Ramsey cardinal.
Then every
 $\Sigma([\kappa]^{<\omega}, [\kappa]^{<\kappa})$ set
 is Ramsey.
\end{proposition}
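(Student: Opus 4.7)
The plan is to follow the template of Proposition~\ref{positiveweaklycompact}, but upgrade the two-coloring from pairs to all finite tuples, and then invoke the definition of a Ramsey cardinal (rather than weak compactness) to simultaneously homogenize every arity at once.

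Concretely, fix $\mc{S} \in \Sigma([\kappa]^{<\omega}, [\kappa]^{<\kappa})$ and a generating set $\mc{Q}$ of $([\kappa]^{<\omega}, [\kappa]^{<\kappa})$-patterns. For every finite $A \subseteq \kappa$, let $B_A$ be some $B \in [\kappa]^{<\kappa}$ with $(A, B) \in \mc{Q}$ if such a $B$ exists. Define $c : [\kappa]^{<\omega} \to 2$ by $c(A) = 1$ if some $B_A$ was chosen, and $c(A) = 0$ otherwise. Because $\kappa$ is Ramsey, there is $H \in [\kappa]^\kappa$ such that $|c``[H]^n| = 1$ for every $n < \omega$; write $c_n$ for the constant value of $c$ on $[H]^n$.

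I would then split into two cases. If $c_n = 0$ for every $n$, then no finite subset of $H$ has any associated $B$, so no $X \in [H]^\kappa$ can match a pattern in $\mc{Q}$, and $H$ witnesses that $\mc{S}$ is avoided. Otherwise, pick any $n$ with $c_n = 1$. Then for every $A \in [H]^n$ we have an associated $B_A$ with $(A, B_A) \in \mc{Q}$, so Lemma~\ref{fastconstruction} applies to give $H' \in [H]^\kappa$ that is fast for the map $A \mapsto B_A$ on $n$-element subsets. Lemma~\ref{fastworks} then ensures that every $X \in [H']^\kappa$ matches some pattern from $\mc{Q}$, so $[H']^\kappa \subseteq \mc{S}$ and $H'$ is homogeneous.

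There is no real obstacle here beyond verifying that the shrinking lemmas, originally phrased for a fixed arity $n$, apply to the single arity we have selected after using Ramseyness; the key leverage point is that the Ramsey property delivers a single $H$ that is homogeneous at every arity simultaneously, so the argument does not need to juggle countably many thinnings. The mild subtlety worth noting is that the choice of $B_A$ must be made uniformly beforehand (so that the coloring $c$ is well-defined), but this is a routine appeal to the Axiom of Choice and imposes no extra hypothesis.
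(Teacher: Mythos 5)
Your proposal is correct and follows essentially the same route as the paper: define the indicator colorings of which finite sets admit a pattern, use Ramseyness of $\kappa$ to homogenize all arities at once, and then either conclude avoidance of $\mc{S}$ or apply Lemma~\ref{fastconstruction} and Lemma~\ref{fastworks} at the single arity where the color is $1$. No gaps.
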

\begin{proof}
Fix $\mc{S} \subseteq [\kappa]^\kappa$
 in $\Sigma([\kappa]^{<\omega}, [\kappa]^{<\kappa})$.
Let $\mc{Q}$ be the set of patterns which
 generate $\mc{S}$.
For each $A \in [\kappa]^{<\omega}$, if there is some
 $B \in [\kappa]^{<\kappa}$ such that
 $(A,B) \in \mc{Q}$, then let $B_A$ be some such $B$.
For each $n \in \omega$,
 let $c_n : [\kappa]^n \to 2$ be the following
 coloring:
 $$ c_n(A) := \begin{cases}
 1 & \mbox{if } (A,B) \in \mc{Q}
 \mbox{ for some } B, \\
 0 & \mbox{otherwise.}
 \end{cases}$$
Since $\kappa$ is a Ramsey cardinal,
 let $H \in [\kappa]^\kappa$
 simultaneously homogenize each $c_n$.

There are two cases.
The first case is that for all $n \in \omega$,
 $c_n``[H]^n = \{0\}$.
When this happens,
 no $X \in [H]^\kappa$
 can match any pattern
 $(A,B) \in \mc{Q}$,
 so $H$ is homogeneous for $\mc{S}$.

The other case is that there is some fixed
 $n \in \omega$ such that
 $c_n``[H]^n = \{1\}$.
Each $A \in [H]^n$ has an associated
 $B_A$.
Apply Lemma~\ref{fastconstruction}
 to get a set $H' \in [H]^\kappa$
 that is fast for $A \mapsto B_A$.
By Lemma~\ref{fastworks},
 each $X \in [H']$ matches a pattern in $\mc{Q}$.
\end{proof}

If $\kappa$ is a Ramsey cardinal,
 then for any cardinal $\lambda < \kappa$,
 for any coloring
 $d : [\kappa]^{<\omega} \to \lambda$,
 there is set $H \in [\kappa]^\kappa$ such that
 for all $n < \omega$,
 $|d ``[H]^n| = 1$.
This gives us the following:
\begin{proposition}
\label{ramseymulti}
Let $\kappa$ be Ramsey
 and let $1 \le \lambda < \kappa$.
Let $c : [\kappa]^\kappa \to (\lambda + 1)$
 be such that for each $\alpha < \lambda$,
 $c^{-1}(\alpha) \in
 \Sigma([\kappa]^{<\omega}, [\kappa]^{<\kappa})$.
Then $c$ is Ramsey.
\end{proposition}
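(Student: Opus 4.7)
The plan is to mimic the proof of Proposition~\ref{wcmanyanyonce}, but using the full strength of the Ramsey hypothesis in place of weak compactness, exactly as Proposition~\ref{ramseysaregood} extended Proposition~\ref{positiveweaklycompact}. The key observation is that $\lambda + 1 \le \lambda^+ \le \kappa$, so colorings of $[\kappa]^{<\omega}$ into $\lambda + 1$ colors are handled by the Ramsey property of $\kappa$.

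First I would fix, for each $\alpha < \lambda$, a set $\mc{Q}_\alpha$ of $([\kappa]^{<\omega}, [\kappa]^{<\kappa})$-patterns which generates $c^{-1}(\alpha)$. For each $A \in [\kappa]^{<\omega}$, if there is some $B$ and some $\alpha < \lambda$ with $(A,B) \in \mc{Q}_\alpha$, choose one such pair and call it $B_A$; note that since $c$ is a function, the $\alpha$ witnessing this is uniquely determined by $A$. Then for each $n \in \omega$ define $d_n : [\kappa]^n \to (\lambda + 1)$ by $d_n(A) = \alpha$ if $(A, B_A) \in \mc{Q}_\alpha$ for some (unique) $\alpha < \lambda$, and $d_n(A) = \lambda$ otherwise. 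Amalgamate these into a single $d : [\kappa]^{<\omega} \to (\lambda + 1)$.

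Since $\kappa$ is Ramsey and $\lambda + 1 \le \kappa$, there is $H \in [\kappa]^\kappa$ such that for each $n$, $d``[H]^n$ is a singleton $\{\alpha_n\}$. Now split into cases. If $\alpha_n = \lambda$ for every $n$, then for any $X \in [H]^\kappa$ and any $n$, the first $n$ elements of $X$ cannot match the $A$-component of any pattern in any $\mc{Q}_\alpha$ for $\alpha < \lambda$; using Observation~\ref{fineenoughfortuples} this shows $X \notin c^{-1}(\alpha)$ for every $\alpha < \lambda$, so $X \in c^{-1}(\lambda)$ and $H$ is homogeneous. Otherwise fix some $n$ with $\alpha_n < \lambda$. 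Then every $A \in [H]^n$ has $(A, B_A) \in \mc{Q}_{\alpha_n}$, so apply Lemma~\ref{fastconstruction} to obtain $H' \in [H]^\kappa$ fast for $A \mapsto B_A$, and then by Lemma~\ref{fastworks} every $X \in [H']^\kappa$ matches some pattern in $\mc{Q}_{\alpha_n}$, giving $X \in c^{-1}(\alpha_n)$.

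The only real subtlety, and the thing I would watch for while writing, is the well-definedness of $d_n$: the same pair $A$ might in principle lie as the first coordinate of patterns in several distinct $\mc{Q}_\alpha$, but since the $c^{-1}(\alpha)$ are pairwise disjoint and each $[A; B_A]$ sits inside the corresponding $c^{-1}(\alpha)$, the witnessing $\alpha$ is forced to be unique. Once that is settled the rest is bookkeeping, and the argument essentially writes itself by analogy with Propositions~\ref{wcmanyanyonce} and \ref{ramseysaregood}.
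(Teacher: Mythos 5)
Your proposal is correct and follows essentially the same route as the paper: define $d : [\kappa]^{<\omega} \to (\lambda+1)$ from the generating families $\mc{Q}_\alpha$, homogenize all the $d\restriction[\kappa]^n$ at once using the Ramsey property of $\kappa$, and in the case of a color $\alpha < \lambda$ shrink via Lemmas~\ref{fastconstruction} and \ref{fastworks}; your justification of the well-definedness of $d$ (which the paper only asserts) is welcome, though it implicitly uses that $[A;B_1\cup B_2]\neq\emptyset$ because $|B_1 \cup B_2| < \kappa$. One small phrasing point: in the all-$\lambda$ case you should note that \emph{every} finite $A \subseteq X$, not just an initial segment of $X$, lies in $[H]^{<\omega}$ and hence is not the first component of any pattern in any $\mc{Q}_\alpha$ (a matched pattern's $A$-component need not be an initial segment of $X$), so the appeal to Observation~\ref{fineenoughfortuples} is unnecessary and homogeneity alone yields $X \in c^{-1}(\lambda)$.
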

\begin{proof}
The proof is analogous to
 Proposition~\ref{wcmanyanyonce}.
For each $\alpha < \lambda$,
 let $\mc{Q}_\alpha$ be the set of patterns
 which generate $c^{-1}(\alpha)$.
We let $d : [\kappa]^{<\omega} \to (\lambda + 1)$
 be such that $d(A) := \alpha$ if
 $(A,B) \in \mc{Q}_\alpha$ for some $B$,
 and $d(A) := \lambda$ otherwise.
Note that $d$ is well-defined.
Since $\kappa$ is Ramsey,
 let $H \in [\kappa]^\kappa$ be such that
 $|d``[H]^n| = 1$ for all $n \in \omega$.

There are two cases.
The first case is that
 $d``[H]^n = \{ \lambda \}$ for all $n$.
In this case, it can be argued that
 each $X \in [H]^\kappa$ is in
 $d^{-1}(\lambda)$.
The other case is that
 $d``[H]^n = \{ \alpha \}$ for some fixed
 $n < \omega$ and $\alpha < \lambda$.
In this case, $H$ can be shrunk as before to
 produce $H' \in [H]^\kappa$
 with the property that each
 $X \in [H']^\kappa$ is in $c^{-1}(\alpha)$.
\end{proof}

\section{All $\Sigma([\kappa]^{<\omega},
 \p(\kappa) - \mc{U})$
 sets are Ramsey if $\mc{U}$ is a 
 $\kappa$-complete ultrafilter}
\label{section_measurable}

So far, we have said little about
 patterns $(A,B)$ where $|B| = \kappa$.
In this section, we will show that when
 $\kappa$ is a measurable cardinal
 and when we fix a
 $\kappa$-complete ultrafilter on $\kappa$,
 sets $B$ not in the ultrafilter are small enough
 to be used in patterns $(A,B)$
 that will still generate Ramsey sets.
Recall that an ultrafilter $\mc{U}$
 is $\kappa$-\textit{complete} iff it is closed under
 intersections of size $< \kappa$.
An ultrafilter on $\kappa$
 is \textit{normal} iff it is $\kappa$-complete
 and moreover is closed under
 \textit{diagonal} intersections.

\begin{theorem}
\label{measurable_theorem}
Let $\kappa$ be a measurable cardinal
 and let $\mc{U}$ be a normal ultrafilter
 on $\kappa$.
Then every $\Sigma([\kappa]^{<\omega},
 \p(\kappa) - \mc{U})$ set is Ramsey,
 as witnessed by a set $H \in \mc{U}$.
\end{theorem}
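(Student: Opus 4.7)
The plan is to mimic the proof of Proposition~\ref{ramseysaregood} but in two respects strengthen the combinatorics using the ultrafilter: first, Rowbottom's theorem guarantees that each finite-dimensional coloring can be homogenized on a set in $\mc{U}$ (not merely on some $H \in [\kappa]^\kappa$), and second, the shrinking step from Lemma~\ref{fastconstruction} has to be replaced, because now each $B_A$ may have size $\kappa$. This is where normality of $\mc{U}$ does the work: although $B_A$ may be large, we have $\kappa - B_A \in \mc{U}$, and a diagonal-intersection argument lets us thin down to a set still in $\mc{U}$ that avoids all of the ``tails'' of the $B_A$'s.

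First I would fix a set $\mc{Q}$ of $([\kappa]^{<\omega}, \p(\kappa) - \mc{U})$-patterns generating $\mc{S}$, and for each $A \in [\kappa]^{<\omega}$ that is the first coordinate of some pattern in $\mc{Q}$, fix a choice $B_A$ with $(A,B_A) \in \mc{Q}$. Define, for each $n \in \omega$, the coloring $c_n : [\kappa]^n \to 2$ by $c_n(A) = 1$ iff $B_A$ was chosen. By Rowbottom's theorem applied to the normal ultrafilter $\mc{U}$, for each $n$ there is some $H_n \in \mc{U}$ with $|c_n``[H_n]^n| = 1$; then $H^* := \bigcap_n H_n \in \mc{U}$ by $\kappa$-completeness, and $H^*$ is simultaneously homogeneous for every $c_n$.

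If every $c_n$ is constantly $0$ on $H^*$, then no $A \in [H^*]^{<\omega}$ has an associated $B_A$, so no $X \in [H^*]^\kappa$ matches any pattern of $\mc{Q}$, and $H^*$ witnesses that $\mc{S}$ is Ramsey. Otherwise, fix the least $n$ with $c_n``[H^*]^n = \{1\}$, so that $B_A$ is defined for every $A \in [H^*]^n$, with $\kappa - B_A \in \mc{U}$ in each case. The task now is to produce $H' \subseteq H^*$ with $H' \in \mc{U}$ such that for every $A \in [H']^n$ and every $\beta \in H'$ with $\beta > \max A$ one has $\beta \notin B_A$; once we have this, the first $n$ elements $A$ of any $X \in [H']^\kappa$ satisfy $B_A \cap X = \emptyset$ (the elements of $X$ below $\sup A$ lie in $A$, which is disjoint from $B_A$, and the rest lie above $\max A$ in $H'$, hence outside $B_A$), so $X \in [A;B_A] \subseteq \mc{S}$.

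To get $H'$, I would use the standard $n$-dimensional diagonal-intersection construction. Work inside $H^*$; by induction on $n$, using normality of $\mc{U}$ (so that the usual diagonal intersection $\Delta_{\alpha < \kappa} U_\alpha$ lies in $\mc{U}$ whenever every $U_\alpha$ does), one obtains a set $H' \in \mc{U}$ with $H' \subseteq H^*$ such that for every $A \in [H']^n$ and every $\beta \in H'$ above $\max A$, $\beta$ lies in $\kappa - B_A$. Concretely, for $A \in [\kappa]^n$ set $U_A := \kappa - B_A$ (or $\kappa$ if $B_A$ is undefined), and iterate the diagonal-intersection operation $n$ times, peeling off coordinates of $A$ one at a time; this produces the required $H'$ in $\mc{U}$. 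The main obstacle, and the only step that goes beyond the template of Proposition~\ref{ramseysaregood}, is verifying this iterated diagonal intersection, which is where normality (rather than mere $\kappa$-completeness) of $\mc{U}$ is essential.
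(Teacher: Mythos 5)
Your proposal is correct and follows essentially the same route as the paper's proof: since each $B_A \notin \mc{U}$, its complement is in $\mc{U}$, so normality (diagonal intersection) yields a set in $\mc{U}$ avoiding the tails of the $B_A$'s, and the Rowbottom/Ramsey property of the normal measure homogenizes the finite-dimensional colorings. The only cosmetic differences are that the paper performs the avoidance step first (collapsing all $A$ with a fixed maximum via $\kappa$-completeness and then taking a single diagonal intersection) before homogenizing, whereas you homogenize first and then iterate the diagonal intersection $n$ times; both variants are standard and sound.
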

\begin{proof}
Fix $\mc{S}$ in $\Sigma([\kappa]^{<\omega},
 \p(\kappa) - \mc{U})$,
 and let $\mc{Q}$ be the set of
 $([\kappa]^{<\omega},
 \p(\kappa) - \mc{U})$-patterns
 which generates it.
For each $A \in [\kappa]^{<\omega}$,
 let $C_A \in \p(\kappa) - \mc{U}$
 be some set $B$ such that
 $(A,B) \in \mc{Q}$ if such a $B$ exists,
 and let $C_A = \emptyset$ otherwise.

For each $\alpha < \kappa$,
 let $Y_\alpha = \bigcap \{
 \kappa - C_A : \max{A} = \alpha \} \in \mc{U}$.
Let $Y$ be the diagonal intersection
 of these $Y_\alpha$'s:
 $Y = \{ \beta :
 \beta \in \bigcap_{\alpha < \beta} Y_\alpha \}$,
 which is in $\mc{U}$ because $\mc{U}$ is normal.
Suppose temporarily that
 $A \in [Y]^{<\omega}$, $y \in Y$, and $A < y$.
Let $\alpha = \max A$, so $\alpha < y$.
Since $y \in Y$, by definition we have
 $y \in Y_\alpha$.
This implies that $y \in \kappa - C_A$.
Hence, $y \not\in C_A$.

Now let $c : [Y]^{<\omega} \to 2$ be the coloring
 given by $c(A) = 1$ if $(A,C_A) \in \mc{Q}$,
 and $c(A) = 0$ otherwise.
Since $Y \in \mc{U}$ and $\mc{U}$ is $\kappa$-complete,
 there is some $H \in [Y]^\kappa$ in $\mc{U}$
 that is homogeneous for $c$.
If $c``[H]^n = \{0\}$ for all $n$,
 then no $X \in [H]^\kappa$ matches a pattern in
 $\mc{Q}$, and we are done.
If $c``[H]^n = \{1\}$ for some fixed $n$,
 then consider any $X \in [H]^\kappa$.
Let $A$ be the first $n$ elements of $X$.
By what we said above, any element of $Y$
 greater than $\max A$ is not in $C_A$.
Hence, every element of $X$ greater than $\max A$
 is not in $C_A$.
This shows that $X \cap C_A = \emptyset$.
Thus, $X$ matches the pattern $(A,C_A) \in \mc{Q}$.
\end{proof}

If $\mc{U}$ is not a normal ultrafilter
 in the above theorem
 but only a $\kappa$-complete ultrafilter,
 then we have the weaker conclusion
 that $H \in [\kappa]^\kappa$.
This can be proved by modifying
 Lemma~\ref{fastconstruction}.

\section{Not all
 $\Delta([\kappa]^{\omega}, [\kappa]^{<\kappa}$)
 sets are Ramsey if $\kappa > \omega$}

It is well known that assuming the Axiom of Choice,
 not every subset of $[\omega]^\omega$ is Ramsey.
Since $[\omega]^\omega = \Delta(
 [\omega]^{\omega}, [\omega]^{\le \omega})$,
 we have that not every
 $\Delta(
 [\omega]^{\omega}, [\omega]^{\le \omega})$
 set is Ramsey.
In this section,
 we will show that the argument
 for $[\omega]^\omega$
 shows that when $\kappa > \omega$,
 not every
 $\Sigma([\kappa]^\omega, [\kappa]^{<\kappa})$
 set is Ramsey.

\begin{observation}
Let $\kappa > \omega$
 be a cardinal.
For $A \in [\kappa]^\omega$,
 there is a
 $([\kappa]^\omega, [\kappa]^{<\kappa})$-pattern
 $(A,B)$ such that a set $X \in [\kappa]^\kappa$
 matches $(A,B)$ iff the first $\omega$ elements
 of $X$ are
 the elements of $A$.
\end{observation}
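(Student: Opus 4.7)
The plan is direct. Enumerate $A$ in increasing order as $\{a_n : n < \omega\}$ and set $\alpha := \sup_n a_n$. I propose taking
$$B := \alpha \setminus A.$$
Then $A \cap B = \emptyset$ by construction, and $|B| \le |\alpha| < \kappa$, so $(A,B)$ is a legitimate $([\kappa]^\omega, [\kappa]^{<\kappa})$-pattern.

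Next I would verify the biconditional. Matching the pattern is equivalent to $A \subseteq X$ together with $X \cap (\alpha \setminus A) = \emptyset$, which is the single condition $X \cap \alpha = A$. If $X \cap \alpha = A$, then the elements of $X$ lying in the initial segment $\alpha$ are precisely $a_0, a_1, \ldots$ in order type $\omega$, and every other element of $X$ is $\ge \alpha > a_n$ for all $n$; so the first $\omega$ elements of $X$ are exactly $A$. Conversely, suppose the first $\omega$ elements of $X$ are the $a_n$. Any $x \in X$ with $x < \alpha$ satisfies $x < a_n$ for some $n$, and since the first $n$ elements of $X$ are already $a_0, \ldots, a_{n-1}$, $x$ must equal $a_m$ for some $m < n$. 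Hence $X \cap \alpha \subseteq A$, and the reverse inclusion is immediate from $A \subseteq X$ and $A \subseteq \alpha$.

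The main obstacle is ensuring $|\alpha| < \kappa$, i.e.\ that $A$ is bounded in $\kappa$. This is automatic when $\cf(\kappa) > \omega$ (in particular for every regular $\kappa > \omega$). If $\kappa$ is singular of cofinality $\omega$ and $A$ happens to be cofinal in $\kappa$, the plan breaks: in fact no $B \in [\kappa]^{<\kappa}$ can witness the observation for such an $A$, since $X := \kappa \setminus B$ would match $(A,B)$ yet cannot have $A$ as its first $\omega$ elements (that would force $|X| = \omega$). So the observation is really being applied in the regime $\cf(\kappa) > \omega$, which is all that the subsequent non-Ramsey construction requires.
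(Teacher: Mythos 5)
Your construction $B=\sup(A)\setminus A$ is correct and is surely the intended witness: the paper states this observation without proof, and your verification of the biconditional is fine. Your caveat about unbounded $A$ is also accurate as a criticism of the literal statement (for $A$ cofinal in $\kappa$, no pattern with first component $A$ and small $B$ can work, since $\kappa\setminus B$ always matches it while no $X\in[\kappa]^\kappa$ has a cofinal first-$\omega$ segment), but the right repair is to restrict to $A$ with $\sup A<\kappa$ rather than to $\cf(\kappa)>\omega$: Proposition~\ref{acmakesitfail} works for every $\kappa>\omega$, including singular $\kappa$ of countable cofinality, because the first $\omega$ elements of any $X$ of order type $\kappa>\omega$ always form a bounded subset of $\kappa$, so only bounded $A$ are ever needed there.
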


Given sets $A, B \in [\kappa]^\kappa$,
 recall that $A \Delta B$ is the set
 $(A - B) \cup (B - A)$.
This next proposition uses the Axiom of Choice.

\begin{proposition}
\label{acmakesitfail}
Let $\kappa > \omega$ be a cardinal.
There is a $\Delta([\kappa]^{\omega}, [\kappa]^{<\kappa})$
 set that is not Ramsey.
\end{proposition}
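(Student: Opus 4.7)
The plan is to imitate the classical construction of a non-Ramsey subset of $[\omega]^\omega$, adapted using the observation preceding the statement. By the Axiom of Choice, I would pick a system of representatives for the equivalence relation on $[\kappa]^\omega$ defined by $A \sim B$ iff $A \Delta B$ is finite, writing $R_{[A]}$ for the representative of $A$'s class. Define a parity function $\chi : [\kappa]^\omega \to 2$ by $\chi(A) := |A \Delta R_{[A]}| \bmod 2$. Letting $A_X \in [\kappa]^\omega$ denote the set of the first $\omega$ elements of any $X \in [\kappa]^\kappa$, I set
$$\mc{S} := \{ X \in [\kappa]^\kappa : \chi(A_X) = 1 \}.$$

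The first step is to verify $\mc{S} \in \Delta([\kappa]^\omega, [\kappa]^{<\kappa})$. Here the decisive point is that, since $\kappa > \omega$, every $X \in [\kappa]^\kappa$ has an $\omega$-th element $x_\omega < \kappa$ which strictly bounds $A_X$; in particular $\sup A_X \le x_\omega < \kappa$. For each bounded $A \in [\kappa]^\omega$, the pair $(A, \sup(A) \setminus A)$ is a legitimate $([\kappa]^\omega, [\kappa]^{<\kappa})$-pattern, since $|\sup(A) \setminus A| \le |\sup A| < \kappa$. A short check shows that $X$ matches $(A, \sup(A)\setminus A)$ iff $X \cap \sup(A) = A$ iff $A_X = A$. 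Hence $\mc{S}$ is generated by the patterns $\{ (A, \sup(A)\setminus A) : A \in [\kappa]^\omega,\ \sup A < \kappa,\ \chi(A) = 1 \}$, and the complement of $\mc{S}$ is generated by the analogous collection with $\chi(A) = 0$ (unbounded $A$ are irrelevant since no $X$ has $A_X$ unbounded).

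For non-Ramsey-ness, I consider an arbitrary $H \in [\kappa]^\kappa$ with increasing enumeration $\langle h_\alpha : \alpha < \kappa \rangle$, and compare the two sets $H$ and $H \setminus \{h_0\}$, both in $[H]^\kappa$. Their first-$\omega$-element sets are $A_0 = \{ h_n : n < \omega \}$ and $A_1 = \{ h_n : 1 \le n < \omega \}$ respectively, so $A_0 \Delta A_1 = \{h_0\}$ and consequently $A_0 \sim A_1$ share a common representative $R$. Then $A_0 \Delta R$ and $A_1 \Delta R$ differ by exactly the single element $h_0$, so their cardinalities differ by $1$ and $\chi(A_0) \ne \chi(A_1)$. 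Thus precisely one of $H, H \setminus \{h_0\}$ lies in $\mc{S}$, showing $H$ is not homogeneous.

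There is no serious obstacle: the construction is a direct adaptation of the classical trick, and the only point that needs care is the boundedness of $A_X$, which is what makes the pattern $(A, \sup(A) \setminus A)$ have second coordinate of size $< \kappa$ and thereby places $\mc{S}$ inside $\Delta([\kappa]^\omega, [\kappa]^{<\kappa})$ (rather than merely inside $\Sigma([\kappa]^\omega, [\kappa]^{\le \kappa})$).
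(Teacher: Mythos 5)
Your proposal is correct and is essentially the paper's own argument: you choose representatives modulo finite symmetric difference for the set of the first $\omega$ elements, color by the parity of the symmetric difference with the representative, and flip membership by deleting the least element of an arbitrary $H$. The only difference is cosmetic: you verify the $\Delta([\kappa]^\omega,[\kappa]^{<\kappa})$ membership explicitly via the patterns $(A,\sup(A)\setminus A)$ (handling the case $\cf(\kappa)=\omega$ correctly), whereas the paper delegates this to its preceding Observation.
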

\begin{proof}
Given a set $X \in [\kappa]^\kappa$,
 let $X'$ be the set of the first $\omega$ elements of $X$.
Given $X_1, X_2 \in [\kappa]^\kappa$,
 we write $X_1 \equiv X_2$ iff
 1) $\sup X_1' = \sup X_2'$ and
 2) $|X_1' \Delta X_2'| < \omega$.
Using the Axiom of Choice,
 we may pick a representative from each
 $\equiv$-equivalence class.
Let $\mc{S} \subseteq [\kappa]^\kappa$
 be defined such that for each
 $X \in [\kappa]^\kappa$,
 $X \in \mc{S}$ iff $|X' \Delta Y'|$ is even,
 where $Y$ is the representative from
 $X$'s $\equiv$-equivalence class.
Now, given any $X_1 \in [\kappa]^\kappa$,
 there is some $X_2 \in [X_1]^\kappa$ such that
 $X_1 \in \mc{S}$ iff $X_2 \not\in \mc{S}$:
 to produce such an $X_2$, simply remove the first
 element from $X_1$.
\end{proof}

\section{Constructible Patterns}
\label{section_constructible}

We mentioned that,
 assuming the Axiom of Choice,
 there is a subset
 of $[\omega]^\omega$
 that is not Ramsey.
However,
 if $\mc{S} \subseteq [\omega]^\omega$
 is in $L(\mbb{R})$
 and we assume there are large cardinals
 in the universe,
 then $\mc{S}$ is Ramsey
 \cite{kan}.
With the same large cardinal assumptions,
 Martin showed \cite{kan}
 that every $\mc{S} \subseteq [\omega_1]^{\omega_1}$
 in $L(\mbb{R})$ is Ramsey
 from the point of view of $L(\mathbb{R})$.
In this section,
 we show results of a similar flavor:
 if the set of patterns $\mc{Q}$ used to generate a set
 is not too complicated, then the set $\mc{S}$
 generated in the full universe
 must be Ramsey.

Recall that if $0^\#$ exists,
 then there is a proper class of indiscernibles
 $\mc{I} \subseteq \mbox{Ord}$,
 called \textit{Silver indiscernibles}, such that $L$
 is the Skolem hull of $\mc{I}$.
Given a cardinal $\kappa$,
 let $\mc{I}_\kappa$ refer to $\kappa \cap \mc{I}$.

\begin{lemma}
\label{finite_lemma}
Let $A \subseteq \mc{I}$ be in $L$.
Then $A$ is finite.
\end{lemma}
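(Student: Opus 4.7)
The plan is to argue by contradiction: suppose $A \subseteq \mc{I}$ is in $L$ and infinite. Since $L$ is the Skolem hull of $\mc{I}$, there are a formula $\varphi(x, y_1, \ldots, y_m)$ and Silver indiscernibles $i_1 < \ldots < i_m$ so that $A = \{x : L \models \varphi(x, i_1, \ldots, i_m)\}$. I will analyze which indiscernibles can belong to $A$ by exploiting indiscernibility interval by interval.

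The main tool is an ``all-or-nothing'' consequence of indiscernibility applied to each relative interval $\mc{I} \cap (i_k, i_{k+1})$ for $0 \le k \le m$ (with the conventions $i_0 = 0$ and $i_{m+1} = \mbox{Ord}$). Namely, if $\alpha, \alpha' \in \mc{I} \cap (i_k, i_{k+1})$, then the increasing tuples $(i_1, \ldots, i_k, \alpha, i_{k+1}, \ldots, i_m)$ and $(i_1, \ldots, i_k, \alpha', i_{k+1}, \ldots, i_m)$ have the same first-order type in $L$, so $L \models \varphi(\alpha, i_1, \ldots, i_m)$ iff $L \models \varphi(\alpha', i_1, \ldots, i_m)$. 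Hence $A \cap \mc{I} \cap (i_k, i_{k+1})$ either equals all of $\mc{I} \cap (i_k, i_{k+1})$ or is empty.

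I then rule out each interval contributing more than finitely many elements. For the top interval $(i_m, \infty)$: if any indiscernible above $i_m$ lies in $A$, the all-or-nothing principle forces $A$ to contain a proper class of indiscernibles, contradicting that $A$ is a set. For a bounded interval $(i_k, i_{k+1})$ with $k < m$, I claim $A$ contains at most one indiscernible from it. For if $A$ contained two distinct indiscernibles $\alpha_1 < \alpha_2$ from the interval, then ``the least element of $A$ strictly between $i_k$ and $i_{k+1}$'' would be definable in $L$ from $i_1, \ldots, i_m$, giving a Skolem term $\tau$ with $\tau(i_1, \ldots, i_m) = \alpha_1$; applying indiscernibility once more to swap $\alpha_1$ with $\alpha_2$ in the corresponding increasing tuple yields $\tau(i_1, \ldots, i_m) = \alpha_2$, so $\alpha_1 = \alpha_2$, a contradiction.

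Combining these bounds, $A$ is contained in $\{i_1, \ldots, i_m\}$ together with at most one indiscernible from each of the $m$ bounded intervals, so $|A| \le 2m$, contradicting that $A$ is infinite. The main obstacle is setting up the indiscernibility step cleanly when the varying indiscernible $\alpha$ sits in the middle of the parameter tuple rather than at one end; this requires the general form of indiscernibility for $\mc{I}$, which allows the replacement of any single entry of an increasing tuple while keeping the others fixed.
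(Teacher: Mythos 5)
Your proof is correct, but it takes a genuinely different route from the paper's. The paper's argument is a short appeal to the defining property of $0^\#$: if $A \subseteq \mc{I}$ were infinite and in $L$, then taking a countably infinite $C \subseteq A$ and any $\alpha \in \mc{I}$ with $\sup C \le \alpha$, the theory of $L_\alpha$ with constants for the elements of $C$ is exactly $0^\#$, so $L$ could define $0^\#$ from parameters it contains, contradicting $0^\# \notin L$. You instead argue purely combinatorially from indiscernibility: writing $A = \{x : L \models \varphi(x, i_1, \dots, i_m)\}$, you get the all-or-nothing behaviour of $A$ on each interval of $\mc{I}$ determined by the parameters, rule out the top interval because $\mc{I}$ is a proper class while $A \in L$ is a set, and rule out two elements in a bounded interval by the standard ``no indiscernible is definable from other indiscernibles'' swap: the least element of $A$ in that interval is the value of a Skolem term at $i_1, \dots, i_m$, and replacing that indiscernible by another indiscernible of the same interval in the increasing tuple forces the same term to take two distinct values. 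This gives the quantitative bound $|A| \le 2m$, slightly more informative than bare finiteness, and it uses only the Skolem-term/indiscernible-swapping machinery that the paper itself deploys later in Proposition~\ref{zero_sharp_prop}, rather than the fact that $0^\#$ is literally the type of the indiscernibles; the paper's proof is shorter but leans on that stronger fact. One small polish: in the bounded-interval step, take $\alpha_1$ to be the least element of $A$ in the interval (it is an indiscernible since $A \subseteq \mc{I}$) and swap it with any other indiscernible of that interval; as written you fix $\alpha_1 < \alpha_2$ in $A$ first and then identify the least element with $\alpha_1$, which need not hold literally but is harmless after relabeling.
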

\begin{proof}
Given any countably infinite subset $C$ of $\mc{I}$
 and $\alpha \in \mc{I}$ satisfying $\sup C \le \alpha$,
 $0^\#$ is the theory of $L_\alpha$
 with constant symbols
 for the elements of $C$.
If $A$ is infinite, then within $L$ we can define
 $0^\#$, which is impossible.
\end{proof}

We must now deal with the $\mc{B}$ components
 of our patterns.
\begin{definition}
Assume $0^\#$ exists.
Let $\kappa > \omega$ be a cardinal.
Let $B \subseteq \kappa$ be in $L$.
We call $B$ \defemph{bad} iff
 $\mc{I}_\kappa - B$ has size $< \kappa$.
We call $B$ \defemph{good} iff
 $\mc{I}_\kappa \cap B$ has size $< \kappa$.
\end{definition}

If $B$ is bad, then no
 $X \in [\mc{I}_\kappa]^\kappa$ can match $(A,B)$
 for any $A$.
\begin{lemma}
Assume $0^\#$ exists.
Let $\kappa > \omega$ be a cardinal.
Let $B \subseteq \kappa$ in $L$ be not bad.
Then $B$ is good.
\end{lemma}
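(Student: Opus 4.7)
The plan is to prove the contrapositive: if $B \subseteq \kappa$ is in $L$ and $|\mc{I}_\kappa \cap B| = \kappa$, then $|\mc{I}_\kappa - B| < \kappa$ as well (so $B$ cannot fail to be both bad and good at once in the generous sense required by the hypothesis). Concretely, I will show that for any $B \in L$ with $B \subseteq \kappa$, either $\mc{I}_\kappa - B$ or $\mc{I}_\kappa \cap B$ has size $< \kappa$; since $B$ is not bad by assumption, the second option must hold and thus $B$ is good.

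The approach is the standard Silver-indiscernibles trick. Since $B \in L$, there is a first-order formula $\varphi(x, \vec{y})$ and a tuple of parameters $\vec{c} \in L$ such that $\alpha \in B$ iff $L \models \varphi(\alpha, \vec{c})$. Because $L$ is the Skolem hull of $\mc{I}$, every $c_j$ is $t_j(\vec{\iota})$ for a Skolem term $t_j$ and a finite increasing tuple $\vec{\iota}$ of Silver indiscernibles; combining these, I get a single formula $\psi$ and a finite increasing tuple $\vec{\iota} = (\iota_1 < \cdots < \iota_m)$ from $\mc{I}$ with
\[ \alpha \in B \iff L \models \psi(\alpha, \iota_1, \ldots, \iota_m). \]
Let $k$ be largest with $\iota_k < \kappa$ (taking $k = 0$ if none exist), so that $\iota_{k+1} \ge \kappa$ when $k < m$.

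For any $\alpha \in \mc{I}_\kappa$ with $\alpha > \iota_k$, the sequence $\iota_1 < \cdots < \iota_k < \alpha < \iota_{k+1} < \cdots < \iota_m$ is a strictly increasing tuple of indiscernibles, and by indiscernibility the truth value of $\psi(\alpha, \vec{\iota})$ depends only on the order type of this tuple, which is fixed. Hence $\psi(\alpha, \vec{\iota})$ has the same truth value for every such $\alpha$. In one case, every $\alpha \in \mc{I}_\kappa$ with $\alpha > \iota_k$ is in $B$, so $\mc{I}_\kappa - B \subseteq \mc{I}_\kappa \cap (\iota_k + 1)$, which has size $\le |\iota_k| + 1 < \kappa$; this would make $B$ bad, contrary to hypothesis. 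In the other case, no such $\alpha$ is in $B$, so $\mc{I}_\kappa \cap B \subseteq \mc{I}_\kappa \cap (\iota_k + 1)$, again of size $< \kappa$, and $B$ is good as required.

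The main obstacle is essentially bookkeeping: one has to legitimately collapse the parameters $\vec{c}$ into a single indiscernible tuple $\vec{\iota}$ and then cleanly separate $\vec{\iota}$ at $\kappa$ so that the variable $\alpha$ always lies in the same slot of the tuple. Once this setup is in place the indiscernibility step is immediate, and the size estimates are trivial since any ordinal below $\kappa$ has cardinality $< \kappa$.
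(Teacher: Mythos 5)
Your proof is correct and follows essentially the same route as the paper: express $B$ via a formula (Skolem term) in finitely many Silver indiscernibles, observe that the indiscernibles of $\mc{I}_\kappa$ above the parameters below $\kappa$ are uniformly in or out of $B$, and rule out the "all in" case since it would make $B$ bad. The paper states the interval-homogeneity of $B \cap \mc{I}$ more briefly, but the underlying argument is the same.
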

\begin{proof}
Since $0^\#$ exists, let $\alpha_0 < ... < \alpha_l < \kappa$
 be indiscernibles such that whenever $\beta_1$ and $\beta_2$
 are between two consecutive elements of
 $$0, \alpha_0, ..., \alpha_l, \kappa,$$
 then $\beta_1 \in B \cap \mc{I}$
 iff $\beta_2 \in B \cap \mc{I}$.
The set $(\alpha_l, \kappa) \cap \mc{I}_\kappa$
 is either a subset of $B$ or
 disjoint from $B$.
It cannot be a subset of $B$ because then we would have
 that $\mc{I}_\kappa - B$ has size $< \kappa$,
 meaning $B$ is bad.
So it must be disjoint from $B$,
 and therefore $B$ is good.
\end{proof}

We now have that if $\mc{Q} \subseteq L$ is a set
 of patterns and $X \in [\mc{I}_\kappa]^\kappa$
 matches some $(A,B) \in \mc{Q}$, then $A$ is finite
 and $B$ is good.
Hence, the $(A,B)$ that we must consider are essentially
 $([\kappa]^{<\omega}, [\kappa]^{<\kappa})$-patterns:

However, this does not imply that the set
 $\mc{S}$ generated by $\mc{Q}$
 is Ramsey.
The problem is Observation~\ref{fineenoughforpairs},
 which in a more precise form gives us that
 for each $A \in [\kappa]^2$,
 there is some $B \in [\kappa]^{<\kappa}$ such that
 $(A,B) \in L$ and for any
 $X \in [\kappa]^\kappa$,
 $X$ matches $(A,B)$ iff its
 first two elements are the elements of $A$.
This gives us the following:
\begin{observation}
Let $\kappa$ be an infinite
 cardinal that is not weakly compact.
Then there is a set $\mc{Q} \subseteq L$ of
 $([\kappa]^2, [\kappa]^{<\kappa})$-patterns such that
 the set $\mc{S} \subseteq [\kappa]^\kappa$
 generated by $\mc{Q}$
 is not Ramsey.
\end{observation}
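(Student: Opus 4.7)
My plan is to revisit the argument showing that a non-weakly-compact $\kappa$ admits a non-Ramsey $\Delta([\kappa]^2,[\kappa]^{<\kappa})$ set, but to track membership in $L$ carefully. The key point, emphasized in the paragraph just before the statement, is that although a coloring $c : [\kappa]^2 \to 2$ witnessing failure of weak compactness need not lie in $L$, each individual pattern used to code ``the first two elements of $X$ are the pair $A$'' does.

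Concretely, I would fix in $V$ a coloring $c : [\kappa]^2 \to 2$ with no $H \in [\kappa]^\kappa$ satisfying $|c``[H]^2| = 1$, and for each pair $A = \{a_1, a_2\}$ with $a_1 < a_2$ I would take $B_A := a_2 \setminus \{a_1\}$. Then $B_A \in [\kappa]^{<\kappa}$, the pair $(A, B_A)$ is a $([\kappa]^2, [\kappa]^{<\kappa})$-pattern, and $(A, B_A) \in L$ because both coordinates are definable in $L$ from the ordinals $a_1, a_2$. A set $X \in [\kappa]^\kappa$ matches $(A, B_A)$ iff $\{a_1, a_2\} \subseteq X$ and $X$ contains no ordinal below $a_2$ other than $a_1$, i.e., iff the first two elements of $X$ are $a_1$ and $a_2$. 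I would then put
\[
\mc{Q} := \{ (A, B_A) : A \in [\kappa]^2 \text{ and } c(A) = 1 \},
\]
which satisfies $\mc{Q} \subseteq L$, and let $\mc{S}$ be the subset of $[\kappa]^\kappa$ that $\mc{Q}$ generates.

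To check that $\mc{S}$ is not Ramsey, I would argue exactly as in the earlier non-weakly-compact proposition: given any $H \in [\kappa]^\kappa$, the restriction $c \restriction [H]^2$ takes both values, so fix $A_0, A_1 \in [H]^2$ with $c(A_0) = 0$ and $c(A_1) = 1$, and extend each $A_i$ inside $H$ to some $X_i \in [H]^\kappa$ whose first two elements are those of $A_i$. Then $X_1 \in \mc{S}$ and $X_0 \notin \mc{S}$, so $H$ is not homogeneous for $\mc{S}$.

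I do not expect a substantive obstacle. The only subtle point is the one the text has already flagged: what must belong to $L$ is each pattern individually, not the whole family $\mc{Q}$; this is precisely what allows an arbitrary coloring chosen in $V$ to be coded by an $L$-collection of patterns, and it shows that restricting the patterns to $L$ alone cannot force Ramseyness at non-weakly-compact cardinals.
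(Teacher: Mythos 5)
Your proposal is correct and follows essentially the same route as the paper: the paper derives this observation directly from the sharpened form of Observation~\ref{fineenoughforpairs} (each pattern coding ``the first two elements of $X$ are $A$'' lies in $L$, e.g.\ via $B_A = a_2 \setminus \{a_1\}$) together with the non-Ramsey construction from the earlier non-weakly-compact proposition, exactly as you do. Your remark that only the individual patterns, not the family $\mc{Q}$ itself, need lie in $L$ is precisely the point the paper is making.
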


A similar situation occurs when,
 more generally,
 $\kappa$ is not a Ramsey cardinal.
On the other hand, we have the following:
\begin{proposition}
Let $\kappa > \omega$ be a Ramsey cardinal.
Let $\mc{Q} \subseteq L$ be a set of patterns.
Then the set
 $\mc{S} \subseteq [\kappa]^\kappa$
 generated by $\mc{Q}$
 is Ramsey.
\end{proposition}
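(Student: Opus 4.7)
The plan is to reduce everything to Proposition~\ref{ramseysaregood} by searching for a homogeneous set inside the Silver indiscernibles. Since $\kappa$ is Ramsey, $0^\#$ exists, so $\mc{I}$ is a class club in the ordinals; in particular, $\mc{I}_\kappa$ is unbounded in $\kappa$, has size $\kappa$, and has order type $\kappa$. The idea is that once we restrict attention to $[\mc{I}_\kappa]^\kappa$, only ``tame'' patterns from $\mc{Q}$ can possibly be matched.

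The first step I would carry out is to show that if $X \in [\mc{I}_\kappa]^\kappa$ matches a pattern $(A,B) \in \mc{Q}$, then $A$ is finite and $B \cap \mc{I}_\kappa$ has size $< \kappa$. Indeed, $A \subseteq X \subseteq \mc{I}$ and $A \in L$ (as a definable part of the pair $(A,B) \in L$), so Lemma~\ref{finite_lemma} forces $A$ to be finite. Also $X \subseteq \kappa - B$, so $|\mc{I}_\kappa - B| \geq \kappa$, meaning $B$ is not bad; since $B \in L$, the preceding lemma declares $B$ good, i.e., $|B \cap \mc{I}_\kappa| < \kappa$.

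The second step is to transport the problem to $\kappa$ via the order isomorphism $\pi : \mc{I}_\kappa \to \kappa$. Define $\mc{S}' \subseteq [\kappa]^\kappa$ by $X' \in \mc{S}'$ iff $\pi^{-1}(X') \in \mc{S}$, and let
$$\mc{Q}' := \{\, (\pi(A),\ \pi(B \cap \mc{I}_\kappa)) : (A,B) \in \mc{Q},\ A \in [\mc{I}_\kappa]^{<\omega},\ B \text{ good}\, \}.$$
By the first step, an $X' \in [\kappa]^\kappa$ satisfies $X' \in \mc{S}'$ iff $X'$ matches some pattern in $\mc{Q}'$, so $\mc{S}' \in \Sigma([\kappa]^{<\omega}, [\kappa]^{<\kappa})$. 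Proposition~\ref{ramseysaregood} then supplies some $H' \in [\kappa]^\kappa$ homogeneous for $\mc{S}'$, and $H := \pi^{-1}(H') \in [\kappa]^\kappa$ is homogeneous for $\mc{S}$: any $X \in [H]^\kappa$ satisfies $X \subseteq \mc{I}_\kappa$, so $\pi(X) \in [H']^\kappa$, and $X \in \mc{S}$ iff $\pi(X) \in \mc{S}'$ by construction.

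The main obstacle is the bookkeeping in the first step: one must combine Lemma~\ref{finite_lemma} with the not-bad-implies-good lemma to confirm that, after restricting to $[\mc{I}_\kappa]^\kappa$, every pattern from $\mc{Q}$ that can be matched collapses, under $\pi$, to an honest $([\kappa]^{<\omega}, [\kappa]^{<\kappa})$-pattern. Once this reduction is in place, the Ramsey cardinal hypothesis and the earlier section do all the remaining work.
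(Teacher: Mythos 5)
Your proposal is correct and takes essentially the same route as the paper: both first use Lemma~\ref{finite_lemma} together with the good/bad dichotomy to see that on $[\mc{I}_\kappa]^\kappa$ only patterns with finite $A$ and good $B$ can be matched, and then homogenize using the Ramseyness of $\kappa$. The only real difference is packaging: you transfer along the order isomorphism $\mc{I}_\kappa \to \kappa$ and invoke Proposition~\ref{ramseysaregood} as a black box, while the paper reruns that proposition's coloring-and-shrinking argument directly inside $\mc{I}_\kappa$; your version makes explicit the points the paper leaves implicit (obtaining the homogeneous set inside $\mc{I}_\kappa$, and replacing a possibly large $B$ by the small set $B \cap \mc{I}_\kappa$).
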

\begin{proof}
Since $\kappa$ is a Ramsey cardinal,
 $0^\#$ exists.
Consider $\mc{I}_\kappa$.
Let $\mc{Q}' \subseteq \mc{Q}$
 be the set of $(A,B) \in \mc{Q}$ such that
 $A$ is finite and $B$ is good.
By the previous lemmas,
 for each $X \in [\mc{I}_\kappa]^\kappa$,
 we have $X \in \mc{S}$ iff $X$ is in the
 set generated by $\mc{Q}'$.
Thus, it suffices to find a set $H \in
 [\mc{I}_\kappa]^\kappa$ that is homogeneous
 for the set generated by $\mc{Q}'$.
For each $n \in \omega$,
 let $c_n : [\kappa]^n \to 2$
 be the coloring defined by
 $c_n(A) := 1$ if $(A,B) \in \mc{Q}'$ for some $B$,
 and $c_n(A) := 0$ otherwise.
Since $\kappa$ is Ramsey,
 let $H \in [\mc{I}_\kappa]^\kappa$
 homogenize each $c_n$.
If $c_n``[H]^n = \{0\}$ for each $n$,
 then no $X \in [H]^\kappa$ matches a pattern
 in $\mc{Q}'$.
On the other hand, suppose $c_n``[H]^n = \{1\}$ for
 some fixed $n$.
Then we may apply the usual shrinking procedure,
 since each $B$ under consideration is good,
 to produce $H' \in [H]^\kappa$ such that
 every $X \in [H']^\kappa$ matches a pattern in
 $\mc{Q}'$.
\end{proof}

Here is another way to ensure that the set
 generated by $\mc{Q} \subseteq L$ is Ramsey:

\begin{proposition}
\label{zero_sharp_prop}
Assume $0^\#$ exists.
Let $\kappa > \omega$ be a cardinal.
Let $\mc{Q} \in L$ be a set of patterns.
Then the set $\mc{S} \subseteq [\kappa]^\kappa$
 generated by $\mc{Q}$ is Ramsey.
\end{proposition}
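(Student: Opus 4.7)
Our plan is to mimic Proposition~\ref{ramseysaregood} while replacing the Ramsey partition property of $\kappa$ with the indiscernibility of $\mc{I}_\kappa$, which is available because we now have the stronger hypothesis $\mc{Q} \in L$ (rather than merely $\mc{Q} \subseteq L$). As in the previous proof, I would look for a homogeneous set inside $\mc{I}_\kappa$. First, fix a finite tuple $\bar{\jmath} \subseteq \mc{I}$ from which $\mc{Q}$ is $L$-definable, and set $I^* := \mc{I}_\kappa \setminus (\max \bar{\jmath} + 1)$, which has size $\kappa$. By Lemma~\ref{finite_lemma} and the good/bad dichotomy, any $(A,B) \in \mc{Q}$ matched by an $X \in [I^*]^\kappa$ must have $A$ finite and $B$ good. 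So for each $n \in \omega$ I define $c_n : [I^*]^n \to 2$ by $c_n(A) = 1$ iff there exists a good $B$ with $(A,B) \in \mc{Q}$.

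The crux is to show each $c_n$ is constant on $[I^*]^n$. Given $A_1, A_2 \in [I^*]^n$, I would take an order-preserving bijection of $\mc{I}$ fixing $\bar{\jmath}$ pointwise and sending $A_2 \to A_1$; indiscernibility extends this to an $L$-elementary map $\tau$ that fixes $\mc{Q}$. If $B_2$ witnesses $c_n(A_2) = 1$, then $(A_1, \tau(B_2)) \in \mc{Q}$, and $\tau(B_2)$ is again good: by the proof of the good/bad lemma, goodness for a set in $L$ is equivalent to disjointness from a tail of $\mc{I}$, a property manifestly $\tau$-invariant. Hence $c_n(A_1) = 1$.

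Now split into cases. If $c_n \equiv 0$ on $I^*$ for all $n$, then no $X \in [I^*]^\kappa$ matches any pattern in $\mc{Q}$, and $I^*$ itself is homogeneous. Otherwise some $c_n \equiv 1$, and for each $A \in [I^*]^n$ I would choose a good $B_A$ with $(A, B_A) \in \mc{Q}$ (canonically via the $L$-well-order, so its Skolem definition depends only on $\bar{\jmath}$ and $A$). Since $B_A$ is good, $B_A \cap \mc{I}_\kappa$ is bounded below $\kappa$, so Lemma~\ref{fastconstruction} applied to the derived patterns $(A, B_A \cap \mc{I}_\kappa)$ produces $H' \in [I^*]^\kappa$ fast for $A \mapsto B_A \cap \mc{I}_\kappa$. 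Then Lemma~\ref{fastworks} shows that every $X \in [H']^\kappa$ misses $B_A \cap \mc{I}_\kappa$, hence misses $B_A$ itself (as $X \subseteq \mc{I}_\kappa$), and so matches $(A, B_A) \in \mc{Q}$.

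The main obstacle is the constancy argument in the second paragraph: it is the transfer of goodness via the tail-of-$\mc{I}$ characterization that lets indiscernibility substitute for a Ramsey partition property, so it is essential that $\mc{Q}$, and thus each $B$ appearing in a pattern, lives in $L$. A minor technical point is that $B_A$ may itself have size $\kappa$ and be unbounded in $\kappa$, but the shrinking procedure of Lemma~\ref{fastconstruction} only sees $B_A \cap \mc{I}_\kappa$, which is small and bounded below $\kappa$.
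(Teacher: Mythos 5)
The crux step --- that each $c_n$ is constant on $[I^*]^n$ --- is where your argument breaks down, and it is not a repairable detail but exactly the point the paper's proof is built to get around. First, the map you invoke need not exist: an order-preserving bijection of $\mc{I}$ fixing $\bar{\jmath}$ pointwise and sending $A_2$ to $A_1$ requires the blocks of indiscernibles below $\min A_2$ and between consecutive elements of $A_2$ (inside the gap determined by $\bar{\jmath}$) to be order-isomorphic to the corresponding blocks for $A_1$, which fails for typical $A_1, A_2 \in [I^*]^n$. Second, even if you replace the automorphism by a direct indiscernibility transfer, you must write the good witness $B_2$ as a Skolem term $t(\vec{\beta}_0,\vec{\beta}_1,\vec{\beta}_2)$ whose in-gap, below-$\kappa$ parameters $\vec{\beta}_1$ contain $A_2$ but in general also auxiliary indiscernibles lying below or between the elements of $A_2$; transferring to $A_1$ requires re-siting those auxiliary parameters relative to $A_1$ with the same interleaving pattern, and if, say, $A_1$ consists of the first $n$ indiscernibles above $\max(\bar{\jmath}\cap\kappa)$, there is no room. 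So constancy of your $c_n$ on all of $[I^*]^n$ is unjustified (and I see no reason it is true: whether a good witness exists can depend on how much room there is below and inside $A$), and since Proposition~\ref{zero_sharp_prop} assumes no partition property of $\kappa$ at all --- $\kappa$ may be accessible or singular --- you cannot fall back on homogenizing a non-constant coloring. This is precisely why the paper never proves constancy: it fixes a single $X \in [I]^\kappa \cap \mc{S}$ with a matched pattern $(A,B)$, passes to a sparse $J \subseteq I$ (infinitely many elements of $I$ below the first element of $J$ and between any two elements of $J$), and proves only the dichotomy that either $[I]^\kappa \cap \mc{S} = \emptyset$ or $[J]^\kappa \subseteq \mc{S}$; the sparseness of $J$ is exactly what provides room to re-site the tuple $\vec{\beta}_1$ below $\min(Y-A')$ for each $Y \in [J]^\kappa$. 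Some such device is essential and is absent from your proposal.

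Two further points. Your ``canonical'' choice of a good $B_A$ via the $L$-well-order does not deliver what you want: goodness is not an $L$-notion, so the $<_L$-least good witness need not be $L$-definable from $\bar{\jmath}\cup A$ (the $<_L$-least witness overall may be bad), and hence you get no uniform bound on $\sup(B_A\cap\mc{I}_\kappa)$ in terms of $\max A$. Without such a bound, Lemma~\ref{fastconstruction} needs, for each $\alpha<\kappa$, a single ordinal below $\kappa$ dominating $\sup(B_A\cap\mc{I}_\kappa)$ over all $A\in[\alpha]^n$, which requires $\kappa$ to be regular, whereas the proposition allows singular $\kappa$; the paper's argument avoids the fast-set machinery entirely. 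Finally, a small slip: $I^* := \mc{I}_\kappa\setminus(\max\bar{\jmath}+1)$ is empty when $\bar{\jmath}$ has members $\ge\kappa$ (as it typically must, compare the paper's $\vec{\alpha}_1$); you mean to cut only above the largest member of $\bar{\jmath}\cap\kappa$.
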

\begin{proof}
Suppose $\mc{Q} = \rho(\vec{\alpha}_0, \vec{\alpha}_1)$,
 where $\rho$ is a Skolem term and
 $\vec{\alpha}_0, \vec{\alpha}_1$ are finite increasing
 sequences of elements of $\mc{I}$ such that
 $\mbox{max}(\vec{\alpha}_0) < \kappa \le
 \mbox{min}(\vec{\alpha}_1)$.
Let $I = \mc{I}_\kappa \cap
 (\mbox{max}(\vec{\alpha}_0), \kappa)$.
Let $J \in [I]^\kappa$ be such that
 between any two elements of $J$ there
 are infinitely many elements of $I$,
 and there are infinitely many elements of $I$
 before the first element of $J$.
We will show that either
 $[I]^\kappa \cap \mc{S} = \emptyset$
 or $[J]^\kappa \subseteq \mc{S}$.

Suppose there is some fixed
 $X \in [I]^\kappa \cap \mc{S}$.
Let $(A,B) \in \mc{Q}$ be such that
 $X \in [A;B]$.
Because $A \subseteq X \subseteq I$,
 by Lemma~\ref{finite_lemma}
 $A$ is finite.
Since $B \in L$,
 let $B = \tau(\vec{\beta}_0, \vec{\beta}_1, \vec{\beta}_2)$
 where $\tau$ is a Skolem term and
 $\vec{\beta}_0, \vec{\beta}_1, \vec{\beta}_2$
 are finite increasing sequences of elements of
 $\mc{I}$ such that
 $$\mbox{max}(\vec{\beta}_0) \le
 \mbox{max}(\vec{\alpha}_0) <
 \mbox{min}(\vec{\beta}_1) \le
 \mbox{max}(\vec{\beta}_1) <
 \kappa \le
 \mbox{min}(\vec{\beta}_2).$$
Assume that all elements of $A$ occur in $\vec{\beta}_1$.
Enumerate $\vec{\beta}_1$ in increasing order as
 $\vec{\beta}_1 = \langle \beta_1^i : i < n \rangle$.
Let $F \subseteq n$ be such that
 $A = \{ \beta_1^i : i \in F \}$.

Now fix $Y \in [J]^\kappa$.
We must show that $Y \in \mc{S}$.
That is, we must find $(A',B') \in \mc{Q}$
 such that $Y \in [A';B']$.
Let $A'$ be the first $|F|$ elements of $Y$.
Enumerate $A'$ as $A' = \{ \gamma^i \in J : i \in F \}$.
We now must enlarge $A'$ to get a set of size $n$.
Let $\gamma^i \in I$ for $i \in n - F$ be such that
 the sequence $\vec{\gamma} =
 \langle \gamma^i \in I : i < n \rangle$
 is strictly increasing and
 $\gamma^{n-1} < \mbox{min}(Y - A')$.
This is possible because $J$ is sparse enough.
Now let $B' = \tau(\vec{\beta}_0, \vec{\gamma}, \vec{\beta_2})$.
It remains to show that $(A',B') \in \mc{Q}$
 and $Y \in [A',B']$.

Since $(A,B) \in \mc{Q}$, we have
 $$( \{ \beta_1^i : i \in F \},
 \tau(\vec{\beta}_0, \vec{\beta}_1, \vec{\beta}_2 )) \in
 \rho( \vec{\alpha}_0, \vec{\alpha}_1 ).$$
By indiscernibility, we have
 $$( \{ \gamma^i : i \in F \},
 \tau(\vec{\beta}_0, \vec{\gamma}, \vec{\beta}_2 )) \in
 \rho( \vec{\alpha}_0, \vec{\alpha}_1 ).$$
That is, $(A',B') \in \mc{Q}$.

Because $X \subseteq I$,
 there is some element of
 $I \cap (\beta_1^{n-1}, \kappa)$ not in $B$.
So by indiscernibility,
 no element of $I \cap (\beta_1^{n-1}, \kappa)$
 is in $B$.
Again by indiscernibility,
 no element of $I \cap (\gamma^{n-1}, \kappa)$
 is in $B'$.
However,
 $Y - A' \subseteq I \cap (\gamma^{n-1}, \kappa)$,
 because $\gamma^{n-1}$ is
 $< \mbox{min}(Y - A')$.
Because also $A' \cap B' = \emptyset$,
 we have that $Y \cap B' = \emptyset$.
This establishes that
 $Y \in [A';B']$.
\end{proof}

This next question is natural along our
 line of inquiry:
\begin{question}
Does it follow from large cardinals,
 or is it even consistent with the Axiom of Choice,
 that for every set $\mc{Q} \in L(\mbb{R})$
 of $([\omega_1]^{<\omega_1}, [\omega_1]^{<\omega_1})$-patterns,
 the set generated by $\mc{Q}$ is Ramsey?
\end{question}



\section{Acknowledgements}

I would like the thank Andreas Blass
 for discussions on this project.
The referee also simplified and improved
 several theorems,
 in particular Theorem~\ref{theorem2},
 Theorem~\ref{measurable_theorem},
 and Proposition~\ref{zero_sharp_prop}.

\end{document}